\newtheorem{theorem}{Theorem}[section]
\newtheorem{definition}[theorem]{Definition}
\newtheorem{remark}[theorem]{Remark}
\newtheorem{proposition}[theorem]{Proposition}
\newtheorem{assump}{Assumption}
\renewcommand{\vec}[1]{\boldsymbol{#1}}
\def\diff{\mathrm{d}}
\def\R{\mathbb R}
\def\L{\mathcal L}
\begin{document}
\title{A Geometric Extension of the It\^o-Wentzell and Kunita's Formulas}

\author[A. Bethencourt de Le\'{o}n]{Aythami Bethencourt de Le\'{o}n $^{1,}$$^{2,}$$^*$}
\address{$^{1}$ Department of Mathematics, Imperial College London, London SW7 2AZ, UK. \href{mailto:ab1113@ic.ac.uk}{ab1113@ic.ac.uk}}

\address{$^{2}$ Department of Fundamental Mathematics, University of La Laguna, San Cristóbal de La Laguna 38206, Spain.}

\author[S. Takao]{So Takao$^{1,}$$^3$}

\address{$^{3}$ UCL Centre for Artificial Intelligence, University College London, London WC1V 6BH, UK.}

\address{$^{*}$ Corresponding author.}

\maketitle

\begin{abstract}
We extend the It\^o-Wentzell formula for the evolution along a continuous semimartingale of a time-dependent stochastic field driven by a continuous semimartingale to tensor field-valued stochastic processes on manifolds. More concretely, we investigate how the pull-back (respectively, the push-forward) by a stochastic flow of diffeomorphisms of a time-dependent stochastic tensor field driven by a continuous semimartingale evolves with time, deriving it under suitable regularity conditions. We call this result the Kunita-Itô-Wentzell (KIW) formula for the advection of tensor-valued stochastic processes. Equations of this nature bear significance in stochastic fluid dynamics and well-posedness by noise problems, facilitating the development of certain geometric extensions within existing theories.



\noindent{{\bf Key words:} It\^o-Wentzell formula; stochastic processes on manifolds; tensor-valued semimartingales; Lie transport equation.}
\end{abstract}
\newpage



\section{Introduction}
The purpose of this paper is to derive a geometric extension of the classical It\^o's Lemma with appropriate regularity conditions, which also extends the It\^o-Wentzell formula \cite{kunita1981some} and geometric It\^o-type formulas established by Kunita in \cite{kunita1997stochastic}. Such formulas can be understood simply as a stochastic extension of the ``chain rule" in multivariable calculus or differential geometry. More concretely, in this paper we establish formulas for the pull-back and push-forward of tensor-valued stochastic processes along stochastic flows of diffeomorphisms, which we call the {\em Kunita-It\^o-Wentzell (KIW) formulas for the advection of tensor-valued stochastic processes}.
We believe that these results are fundamental in stochastic analysis on manifolds. However, to our knowledge, such results are not available in the literature.


Let $T>0.$ It\^o's Lemma in its simplest version states that if $f:\mathbb{R}^n \rightarrow \mathbb{R}$ is sufficiently regular and $\vec{X}_t$ is an $\mathbb{R}^n$-valued It\^o process, hence described by the equation
\begin{align*}
    \vec{X}_t = \vec{X}_0 + \int_0^t \vec{\mu}_s \, \diff s + \int_0^t \vec{\sigma}_s \, \diff W_s, \quad t \in [0,T],
\end{align*}
where $\vec{\mu}, \vec{\sigma}:\Omega \times [0,T] \rightarrow \mathbb{R}^n$ are progressively measurable processes and $W_t$ is Brownian motion, then $\mathbb{P}$-a.s., $f(\vec{X}_t)$ satisfies the equation
\begin{align}
     f(\vec{X}_t) &= f(\vec{X}_0) + \sum_{j=1}^n \int_0^t \frac{\partial f}{\partial x_j} (\vec{X}_s) \, \diff X_s^j  + \frac{1}{2} \sum_{i,j=1}^n \int_0^t  \frac{\partial^2 f}{\partial x_i \partial x_j} (\vec{X}_s) \, \diff [X^i_\cdot ,X^j_\cdot]_s \nonumber \\
     &= f(\vec{X}_0) + \int_0^t \left( \left (D f(\vec{X}_s) \right)^T \vec{\mu}_s + \frac{1}{2}\operatorname{Tr}\left[ \vec{\sigma}_s^T \left( D^2 f(\vec{X}_s) \right) \vec{\sigma}_s \right] \right) \, \diff s + \int_0^t \left (D f(\vec{X}_s) \right)^T \vec{\sigma}_s\, \diff W_s, \label{eq:og-ito-formula}
\end{align}
for $t \in [0,T],$ where 
$\operatorname{Tr}$ is the trace of a matrix. It\^o's Lemma is the most basic of stochastic chain rules and is fundamental, among other things, as it tells us that if $\vec{X}_t$ is an It\^o process and $f$ is sufficiently regular, then $f(\vec{X}_t)$ is also an It\^o process. In other words, the relation $\vec{X}_t \rightarrow f(\vec{X}_t)$ maps the space of It\^o processes into itself. It is well known that It\^o's Lemma can be extended to stochastic processes driven by continuous semimartingales $S_t,$ i.e. of the form 
\begin{align*}
    \vec{X}_t = \int_0^t \vec{\mu}_s \, \diff S_s = \int_0^t \vec{\mu}_s \, \diff A_s + \int_0^t \vec{\mu}_s \, \diff M_s, \quad t \in [0,T],
\end{align*}
where we have expressed $S_t = A_t + M_t$ uniquely as the sum of a continuous process of finite variation $A_t$ with $A_0=0$, $M_t$ is a continuous local martingale, and $\vec{\mu}_t$ is a progressively measurable process. In this general case, it can be expressed more compactly as
\begin{align*} 
    & f(\vec{X}_t) =  f(\vec{X}_0) + \sum_{j=1}^n \int_0^t \frac{\partial f}{\partial x_j} (\vec{X}_s) \, \diff X_s^j  + \frac{1}{2} \sum_{i,j=1}^n \int_0^t  \frac{\partial^2 f}{\partial x_i \partial x_j}(\vec{X}_s) \, \diff [X^i_\cdot ,X^j_\cdot]_s, \quad t \in [0,T].
\end{align*}
Again, among other things, It\^o's Lemma for semimartingales tells us that $\vec{X}_t \rightarrow f(\vec{X}_t)$ maps the space of continuous semimartingales into itself. Versions of It\^o's Lemma are available in the literature even for non-continuous semimartingales \cite{michel,protter} and in Banach spaces \cite{rozovskii}, but here we will focus on the continuous case. 

In this article, we prove an extension of It\^o's Lemma allowing for {\em all} of the following features: 
\begin{enumerate}[label=(\roman*)]
\item Instead of $f$ being deterministic, it can be a stochastic process $f_t,$ $t \in [0,T]$.
\item Instead of $f$ being scalar-valued, $f$ can be tensor-valued, and further defined over a manifold $Q$.
\item $X_t = \phi_t(x)$ is a stochastic flow of diffeomorphisms, in particular the flow of an SDE, and $X_t$ acts on $f$ by pull-back/push-forward, i.e. $\phi_t^* f$ or $(\phi_t)_* f$. These naturally generalise the notion of ``evaluation" on the flow/inverse flow to tensor fields.
\end{enumerate}
An extension of It\^o's Lemma that includes (i) is known in the mathematical community as the It\^o-Wentzell formula \cite{kunita1981some}.
The It\^o-Wentzell formula states that given a sufficiently regular semimartingale field $\vec{f} : \Omega \times [0,T] \times \mathbb{R}^n \rightarrow \mathbb{R}^n$ of the form
\begin{align*}
\vec{f}(t,\vec{x}) = \vec{f}(0,\vec{x}) +  \int_0^t \vec{g}(s,\vec{x}) \, \diff S_s, \quad t \in [0,T], \quad \vec{x} \in \mathbb{R}^n,
\end{align*}
where $\vec{g}: \Omega \times [0,T] \times \mathbb{R}^n \rightarrow \mathbb{R}^n$ is a sufficiently regular adapted process and $S:\Omega \times [0,T] \rightarrow \mathbb{R}$ is a continuous semimartingale, then for a continuous semimartingale $\vec{M}: \Omega \times [0,T] \rightarrow \mathbb{R}^n,$ $\vec{f}(t,\vec{M}_t)$ satisfies the following relation
\begin{align*} 
    & \vec{f}(t,\vec{M}_t) =  \vec{f}(0,\vec{M}_0) + \int^t_0 \vec{g}(s,\vec{M}_s) \, \diff S_s  + \sum_{j=1}^n \int_0^t \frac{\partial \vec{f}}{\partial x_j} (s, \vec{M}_s) \, \diff M_s^j \nonumber \\
    & + \sum_{j=1}^n \int_0^t  \frac{\partial \vec{g}}{\partial x_j} (s, \vec{M}_s) \, \diff [S_\cdot ,M^j_\cdot]_s + \frac{1}{2} \sum_{i,j=1}^n \int_0^t  \frac{\partial^2 \vec{f}}{\partial x_i \partial x_j} (s, \vec{M}_s) \, \diff [M^i_\cdot ,M^j_\cdot]_s,\quad t \in [0,T].
\end{align*}
The It\^o-Wentzell formula is a useful chain rule that as a corollary tells us that $\vec{M}_t \rightarrow \vec{f}(t, \vec{M}_t)$ maps the space of continuous semimartingales into itself. Extensions of the It\^o-Wentzell formula in low regularity conditions can be found, for instance, in \cite{krylov2011ito}, where the author employs mollifiers to derive a version for distribution-valued processes.  We remark that there exists another classical variation of the It\^o-Wentzell formula, established by Bismut in \cite{itowentzell1} at the same time Kunita proved his formula, where the author computes the evaluation of a stochastic flow along a continuous semimartingale (and not the other way around as we do in Theorem \ref{itoflows} 
which is a completely different problem). Adding point (iii) to the It\^o-Wentzell formula can be easily done as a corollary since pull-back by a flow $\vec{\phi}_t$ of a scalar-valued function $f(t,\vec{x})$ on $\mathbb{R}^n$ is given by $\vec{\phi}_t^* f(t,\vec{x}) = f(t,\vec{\phi_t(x)}),$ so the standard It\^o-Wentzell formula can be applied. We present this result in the preliminaries (Section \ref{sec:preliminaries}).
Moreover, extensions of It\^o's Lemma containing (ii)+(iii) (without
including point (i)) were also established by Kunita in \cite{kunita1981some,kunita1997stochastic}. Indeed, if $K$ is a time-independent sufficiently regular $(r,s)$-tensor field defined on a manifold $Q,$ and $\phi_t: Q \rightarrow Q,$ $t \in [0,T]$ is the flow of the SDE 
\begin{align} \label{phi-map-intro}
\diff \phi_{t}(x) = b(t,\phi_{t}(x)) \, \diff t + \xi(t,\phi_{t}(x)) \circ \diff B_t, \quad t \in [0,T], \quad x \in Q,
\end{align}
where $b,\xi:[0,T] \times Q \rightarrow TQ$ are sufficiently regular vector fields, $B_t$ is Brownian motion (we have changed from the notation $W_t$ for convenience), and $\circ$ represents Stratonovich integration \footnote{We write SDE \eqref{phi-map-intro} in Stratonovich form as Kunita does in his books. We note that in order to derive formulas for geometric objects of the type $\phi_t^* K$, it is more convenient to write it in Stratonovich form. Since we assume enough regularity, the respective formula for the SDE in It\^o form can be easily calculated.}, then the tensor-valued stochastic process $\phi_{t}^* K$ satisfies It\^o's second formula
\begin{align} 
&\phi_{t}^* K = K + \int_0^t \phi_{s}^* (\mathcal{L}_{b} K) \, \diff s
+  \int_0^t \phi_{s}^* (\mathcal{L}_{\xi} K) \, \diff B_s -  \frac{1}{2} \int_0^t \phi_{s}^* (\mathcal{L}^2_{\xi} K) \, \diff s, \quad t \in [0,T],
\label{ito-second-1}
\end{align}
whereas $(\phi_t)_* K$ satisfies It\^o's first formula
\begin{align} 
&(\phi_{t})_* K = K + \int_0^t  \mathcal{L}_{b} [(\phi_{s})_* K] \, \diff s
+  \int_0^t \mathcal{L}_{\xi} [(\phi_{s})_* K] \, \diff B_s -  \frac{1}{2} \int_0^t \mathcal{L}_{\xi}^2 [\phi_{s}^* K] \, \diff s, \quad t \in [0,T].
\label{ito-first-1}
\end{align}
In \cite{kunita1997stochastic}, Kunita works with slightly more general flows depending on two time parameters $\phi_{s,t},$ $0\leq s \leq t \leq T$, corresponding to the initial and final times, and provides four similar formulas depending on which parameter is fixed when performing the integration.

In the present paper, we derive an extension of It\^o's Lemma allowing for all (i),(ii) and (iii), which we call the Kunita-It\^o-Wentzell (KIW) formula for the advection of a tensor-valued stochastic process. This is stated in both a version in It\^o and Stratonovich form, under different regularity assumptions. We stress that our theorem extends {\em all the previous results} and is valid on quite general manifolds. Without being precise about the regularity conditions along the lines of this introduction (we will state this more rigorously in Section \ref{sec:main-result}), the statement of our main theorem is the following.

\begin{theorem}[Kunita-It\^o-Wentzell (KIW) formula for the advection of tensor-valued processes] Let $Q$ be a smooth manifold and $K: \Omega \times [0,T] \rightarrow  \Gamma(T^{(r,s)}Q)$ be a sufficiently smooth tensor-valued semimartingale. Moreover, let $S^i_t = A^i_t + M^i_t$ be continuous semimartingales expressed uniquely as the sum of a continuous process of bounded variation with $A_0^i=0$ and a continuous local martingale, $t \in [0,T],$ $i=1,\ldots,M.$ Assume that $K$ has the explicit form
\begin{align*} 
    K(t,x) = K(0,x) + \sum_{i=1}^M \int^t_0 G_i(s,x) \, \diff A^i_s + \sum_{i=1}^M  \int^t_0 G_i(s,x) \, \diff M^i_s, \quad t \in [0,T],\hspace{0.2cm} x \in Q,
\end{align*}
where $G_i$ are smooth enough adapted tensor-valued processes for $i=1,\ldots,M$. Let $\phi_t$ be the semimartingale solution to the SDE \eqref{flow-map} with smooth enough drift and diffusion vector fields. Then the tensor-valued process $K$ pulled back by the flow $\phi_t$ satisfies the following relation 
\begin{align} \label{KIWkformsimplified}
\begin{split}
    &\phi_t^* K(t,x) =  K(0,x) + \sum_{i=1}^M \int^t_0 \phi_s^* G_i(s,x) \, \diff A^i_s + \sum_{i=1}^M \int^t_0 \phi_s^* G_i(s,x) \,  \diff M^i_s  \\
    & \hspace{50pt}+ \int^t_0 \phi_s^* \mathcal L_b K (s,x) \,  \diff s 
    + \sum_{j=1}^N \int^t_0 \phi_s^* \mathcal L_{\xi_j} K(s,x) \, \diff B_s^j  \\
    &\hspace{20pt}+ \sum_{i=1}^M \sum_{j=1}^N \int^t_0 \phi^*_s \mathcal L_{\xi_j} G_i(s,x) \,  \diff [M^i_{\cdot}, B^j_{\cdot}]_s + \frac12 \sum_{j=1}^N  \int^t_0 \phi^*_s \mathcal L_{\xi_j} \mathcal L_{\xi_j} K(s,x) \,  \diff s,
\end{split}
\end{align}
for all $t \in [0,T]$ and $x \in Q.$ Moreover, the following formula holds for the push-forward of $K$ by the flow
\begin{align} \label{KIWkformsimplifiedpush}
\begin{split}
    &(\phi_t)_* K(t,x) =  K(0,x) + \sum_{i=1}^M \int^t_0 (\phi_s)_* G_i(s,x) \, \diff A^i_s + \sum_{i=1}^M \int^t_0 (\phi_s)_* G_i(s,x) \,  \diff M^i_s  \\
    & \hspace{50pt} - \int^t_0 \mathcal L_b [(\phi_s)_* K](s,x) \,  \diff s 
    - \sum_{j=1}^N \int^t_0 \mathcal L_{\xi_j} [(\phi_s)_* K](s,x) \, \diff B_s^j \\
    &\hspace{20pt} - \sum_{i=1}^M \sum_{j=1}^N \int^t_0 \mathcal L_{\xi_j} [(\phi_s)_* G_i](s,x) \,  \diff [M^i_{\cdot}, B^j_{\cdot}]_s + \frac12 \sum_{j=1}^N  \int^t_0 \mathcal L_{\xi_j} \mathcal L_{\xi_j} [(\phi_s)_* K](s,x) \,  \diff s.
\end{split}
\end{align}
\end{theorem}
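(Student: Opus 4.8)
The plan is to reduce the identity to a computation in a single coordinate chart and then recognise the outcome as the coordinate form of the claimed intrinsic expression. Fix a chart $(U,y)$ on $Q$; for $x\in U$ and for $t$ below the (random) time at which $s\mapsto\phi_s(x)$ leaves $U$, write $K(t,\cdot)$ in components $K^{a_1\cdots a_r}_{b_1\cdots b_s}(t,\cdot)$ and expand the pulled-back tensor via the classical coordinate formula
\[
\phi_t^* K(t,x)=\sum \big(K^{c_1\cdots c_r}_{d_1\cdots d_s}(t,\phi_t(x))\big)\,\phi_t^*(\partial_{y^{c_1}})\otimes\cdots\otimes\phi_t^*(\diff y^{d_s}),
\]
summed over the repeated indices, so that $\phi_t^* K$ is a finite sum of products of three kinds of continuous semimartingales: the scalar components $K^{c\cdots}_{d\cdots}(t,\phi_t(x))$, whose evolution is governed by the scalar It\^o--Wentzell-along-a-flow formula of Section~\ref{sec:preliminaries} (see Theorem~\ref{itoflows}) — this is where feature~(i) enters and produces the terms built from the $G_i$ and the covariations $\diff[M^i_\cdot,B^j_\cdot]$; the pulled-back basis one-forms $\phi_t^*(\diff y^{d})=\diff(\phi_t^{d})$ (whose components are the Jacobian entries $\partial\phi_t^d/\partial x^b$); and the pulled-back basis vector fields $\phi_t^*(\partial_{y^{c}})$ (whose components are the inverse-Jacobian entries). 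The evolutions of the latter two are obtained either by differentiating the flow SDE~\eqref{flow-map} in $x$ — legitimate by the smoothness hypotheses and Kunita's theory of differentiable stochastic flows, which also licenses exchanging $\partial_x$ with the stochastic integrals and whose passage to It\^o form generates the Stratonovich corrections — or, equivalently, by invoking Kunita's time-independent pull-back formula \eqref{ito-second-1} with the fixed tensors $\diff y^{d}$ and $\partial_{y^{c}}$. Multiplying these evolutions together, componentwise and via the It\^o product rule, yields an It\^o decomposition of $\phi_t^* K(t,x)$.

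The core of the proof is then to collect this decomposition by integrator and match it with the right-hand side of \eqref{KIWkformsimplified}. The $\diff M^i_s$-terms combine $G_i^{c\cdots}_{d\cdots}(s,\phi_s)$ with the Jacobian and inverse-Jacobian factors into the components of $\phi_s^* G_i$, and likewise for the $\diff A^i_s$-terms. The $\diff B^j_s$-terms combine the $\xi_j$-driven part of each factor into the components of $\phi_s^*\mathcal{L}_{\xi_j}K$; the pure-drift contributions coming from $b$ reassemble, by Leibniz over $\otimes$, into $\phi_s^*\mathcal{L}_b K$ — and these two facts are just the coordinate content of the time-independent case (cf.~\eqref{ito-second-1}), applied component-wise with the current value $K(s,\cdot)$. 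The mixed covariation $\diff[M^i_\cdot,B^j_\cdot]_s$-terms, arising from crossing the $G_i$-coefficient of the time-dependence of $K$ with the $\xi_j$-coefficient of the flow (with total weight $1$ after adding the contribution from the It\^o--Wentzell correction to that from the Stratonovich-to-It\^o passage of the flow noise), reassemble into the components of $\phi_s^*\mathcal{L}_{\xi_j}G_i$. Finally, the remaining $\diff[B^j_\cdot,B^j_\cdot]_s=\diff s$-terms — the It\^o--Wentzell second-order term in the scalar components, all quadratic covariations among the $\xi_j$-driven parts of the three kinds of factor, and the drift corrections produced in writing the Jacobian SDEs in It\^o form — reassemble into the components of $\tfrac12\phi_s^*\mathcal{L}_{\xi_j}\mathcal{L}_{\xi_j}K$. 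This last identity is the one genuinely sensitive to the tensorial structure and I expect it to be the principal obstacle: one must verify, for general $(r,s)$ and component by component, that this sum of covariation contributions equals the coordinate expression of the iterated Lie derivative of a tensor; the part not involving the $G_i$ reduces to the classical time-independent computation, which isolates the genuinely new bookkeeping (the cross terms between the flow noise and the $G_i$). With the identity verified on each chart, one passes to a global statement by a standard localisation (a partition of unity subordinate to an atlas, or an increasing sequence of stopping times at which $\phi_s(x)$ crosses between charts), using that both sides of \eqref{KIWkformsimplified} are continuous semimartingales and that the asserted identity is local in $x$.

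For the push-forward formula \eqref{KIWkformsimplifiedpush} I would not redo the computation but deduce it from \eqref{KIWkformsimplified}. Put $L_t:=(\phi_t)_* K(t,\cdot)$, so that $\phi_t^* L_t=K(t,\cdot)$, and regard $L_t$ as an unknown continuous tensor-valued semimartingale with an as-yet-undetermined It\^o decomposition along $\diff A^i$, $\diff M^i$, and $\diff s$. Applying the already-established formula \eqref{KIWkformsimplified} to $\phi_t^* L_t$ and equating the result, integrator by integrator, with the prescribed decomposition $\diff K(t,\cdot)=\sum_i G_i\,\diff A^i_t+\sum_i G_i\,\diff M^i_t$ yields a triangular system that pins down the coefficients of $\diff L_t$; undoing the pull-back by applying $(\phi_t)_*$ then rearranges the identity into exactly \eqref{KIWkformsimplifiedpush}, with the flipped signs on the transport terms as displayed (cf.~\eqref{ito-first-1}). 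Alternatively, one runs the coordinate argument of the first two paragraphs verbatim for the inverse flow $\phi_t^{-1}$, which solves the classical backward stochastic-flow equation; this needs the same ingredients and no new idea.
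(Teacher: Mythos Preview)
Your approach to the pull-back formula is essentially the paper's: work in a chart, write $\phi_t^*K$ as a sum of products of the scalar components $K^{c\cdots}_{d\cdots}(t,\phi_t(x))$, Jacobian entries $\partial\phi_t/\partial x$, and inverse-Jacobian entries $(D\phi_t)^{-1}=D\psi_t(\phi_t)$; apply the scalar It\^o--Wentzell formula (Theorem~\ref{itoflows}) to the first kind; differentiate the flow SDE for the second and third; combine via the It\^o product rule; and identify the collected coefficients with the coordinate form of $\phi_s^*\mathcal L_b K$, $\phi_s^*\mathcal L_{\xi_j}K$, $\phi_s^*\mathcal L_{\xi_j}G_i$, and $\tfrac12\phi_s^*\mathcal L_{\xi_j}^2 K$. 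The only cosmetic difference is that the paper first pairs with an arbitrary test tensor $S\in C^\infty(T^{(s,r)}Q)$, working with the scalar $F_t(x)=\langle\phi_t^*K(t,x),S(x)\rangle$; this is equivalent to your direct component-wise expansion. The globalisation by chart-crossing stopping times is also the same (the paper defers to Kunita's Lemma~4.8.2).

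For the push-forward, your primary route has a gap: positing an unknown It\^o decomposition for $L_t=(\phi_t)_*K(t,\cdot)$ and applying the already-proven pull-back formula to $\phi_t^*L_t$ presupposes that $L_t$ is driven by the \emph{same} semimartingales $A^i,M^i,B^j$ with $C^1$ integrands satisfying the hypotheses of the theorem---but establishing that representation is exactly the content of~\eqref{KIWkformsimplifiedpush}. Your alternative (rerun the coordinate argument with the inverse flow) is what the paper actually does: from $\psi_t(\phi_t(y))=y$ one derives $\diff\psi_t(x)=-b^i\partial_i\psi_t\,\diff t-\xi^i\partial_i\psi_t\circ\diff B_t$ and repeats the computation verbatim. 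This is also where the asymmetric regularity requirement ($k=3$ for the push-forward versus $k=1$ for the pull-back) is genuinely consumed, since now $\mathcal L_{\xi_j}^2[(\phi_s)_*K]$ sits outside the push-forward and needs higher derivatives of $\psi_t$.
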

\begin{remark}
As we will see, \eqref{KIWkformsimplifiedpush} requires more regularity on the coefficients of the SDE \eqref{flow-map} than \eqref{KIWkformsimplified}.
\end{remark}

We can also derive a version in Stratonovich form under stronger regularity assumptions. We state this below, again without discussing the precise regularity hypotheses.
\begin{theorem}[KIW formula for the advection of tensor-valued processes: Stratonovich version] 
Let $Q$ be a smooth manifold. Let $K: \Omega \times [0,T] \rightarrow  \Gamma(T^{(r,s)}Q)$ be a sufficiently smooth tensor-valued semimartingale. Moreover, let $S^i_t = A^i_t + M^i_t$ be continuous semimartingales expressed uniquely as the sum of a continuous process of bounded variation with $A_0^i=0$ and a continuous local martingale, $t \in [0,T],$ $i=1,\ldots,M.$ Let $K$ have the explicit form
\begin{align*}
    K(t,x) = K(0,x) + \sum_{i=1}^M \int^t_0 G_i(s,x) \, \diff A_s^i + \sum_{i=1}^M \int^t_0 G_i(s,x) \circ \diff M_s^i, \quad t \in [0,T],\quad x \in Q,
\end{align*}
where $G_i$ are smooth enough tensor-valued adapted processes,
$i=1,\ldots,M$. Let $\phi_t$ be the semimartingale solution to the SDE \eqref{flow-map} with smooth enough drift and diffusion vector fields. Then the tensor-valued process $K$ pulled back by the flow $\phi_t$ satisfies the following relation
\begin{align*} 
    \phi_t^* K(t,x) &=  K(0,x) + \sum_{i=1}^M \int^t_0 \phi_s^* G_i(s,x) \, \diff A^i_s + \sum_{i=1}^M \int^t_0 \phi_s^* G_i(s,x) \circ \diff M_s^i \nonumber \\
    &+ \int^t_0 \phi_s^* \mathcal L_b K (s,x) \, \diff s 
    + \sum_{j=1}^N \int^t_0 \phi_s^* \mathcal L_{\xi_j} K(s,x) \circ \diff B_s^j,\quad 
\end{align*}
for all $t \in [0,T]$ and $x \in Q.$ We also have the following formula for the push-forward of $K$ by the flow
\begin{align*} 
    (\phi_t)_* K(t,x) &=  K(0,x) + \sum_{i=1}^M \int^t_0 (\phi_s)_* G_i(s,x) \, \diff A_s^i + \sum_{i=1}^M \int^t_0 (\phi_s)_* G_i(s,x) \circ \diff M_s^i \nonumber \\
    & - \int^t_0 \mathcal L_b [(\phi_s)_* K] (s,x) \, \diff s
    - \sum_{j=1}^N \int^t_0 \mathcal L_{\xi_j} [(\phi_s)_* K](s,x) \circ \diff B_s^j.
\end{align*}
\end{theorem}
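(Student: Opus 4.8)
The plan is to obtain both Stratonovich formulas directly from the It\^o-form KIW formula established above, by converting the It\^o integrals appearing in its hypothesis and in its conclusion into Stratonovich integrals. Throughout I will use only two elementary facts together with one geometric input. The elementary facts: for a continuous semimartingale $N$ and a continuous semimartingale field $F$ one has $\int_0^t F\circ\diff N = \int_0^t F\,\diff N + \tfrac12[F(\cdot,x),N]_t$, and the bracket $[\,\cdot\,,\,\cdot\,]$ only sees martingale parts. The geometric input: reading the It\^o KIW formula at the level of martingale parts shows that $\phi_s^*$ \emph{transports} the $\diff M^i$-part of the martingale part of a semimartingale field (the coefficient $G_i$ becoming $\phi_s^*G_i$ along the same $\diff M^i$) while \emph{adjoining} a $\diff B^j$-part with coefficient $\phi_s^*\mathcal{L}_{\xi_j}(\cdot)$; for the push-forward it transports by $(\phi_s)_*$ and adjoins $-\mathcal{L}_{\xi_j}[(\phi_s)_*(\cdot)]$ along $\diff B^j$.

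First I would rewrite the hypothesis. Converting $\sum_i\int_0^t G_i\circ\diff M^i_s = \sum_i\int_0^t G_i\,\diff M^i_s + \tfrac12\sum_i[G_i(\cdot,x),M^i]_t$ puts $K$ into an It\^o-type decomposition of exactly the shape required by the first theorem, at the price of one extra \emph{bounded-variation} driver per $i$, namely $[G_i(\cdot,x),M^i]_t$, whose tensor coefficient is the $\diff M^i$-martingale coefficient of $G_i$. Because this extra term has bounded variation, applying the It\^o KIW formula generates no new quadratic-covariation terms from it; it is simply transported, contributing $\tfrac12\sum_i[G_i(\cdot,x),M^i]_t$ pulled back by $\phi_s^*$, which by the martingale-part transport principle above equals $\tfrac12\sum_i[\phi_\cdot^*G_i(\cdot,x),M^i]_t - \tfrac12\sum_{i,j}\int_0^t\phi_s^*\mathcal{L}_{\xi_j}G_i\,\diff[M^i,B^j]_s$.

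Next I would apply the It\^o-form KIW formula to this rewritten $K$ and reassemble. The It\^o martingale term $\sum_i\int\phi_s^*G_i\,\diff M^i$ together with the first piece $\tfrac12\sum_i[\phi_\cdot^*G_i,M^i]$ of the transported correction recombine into $\sum_i\int\phi_s^*G_i\circ\diff M^i$. The drift term $\int\phi_s^*\mathcal{L}_bK\,\diff s$ is already in final form, and the leftover is $\sum_j\int\phi_s^*\mathcal{L}_{\xi_j}K\,\diff B^j + \tfrac12\sum_{i,j}\int\phi_s^*\mathcal{L}_{\xi_j}G_i\,\diff[M^i,B^j]_s + \tfrac12\sum_j\int\phi_s^*\mathcal{L}_{\xi_j}\mathcal{L}_{\xi_j}K\,\diff s$, the coefficient $\tfrac12$ on the $\diff[M^i,B^j]$ term being the sum $-\tfrac12+1$ of the contribution from the transported correction and the cross-term already present in the It\^o KIW formula. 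Now I would invoke the It\^o KIW formula once more, applied to the field $\mathcal{L}_{\xi_j}K$ (whose $\diff M^i$-martingale coefficient is $\mathcal{L}_{\xi_j}G_i$), to read off the martingale part of $\phi_s^*\mathcal{L}_{\xi_j}K$ and hence identify $\tfrac12\sum_j[\phi_\cdot^*\mathcal{L}_{\xi_j}K,B^j]_t = \tfrac12\sum_{i,j}\int\phi_s^*\mathcal{L}_{\xi_j}G_i\,\diff[M^i,B^j]_s + \tfrac12\sum_j\int\phi_s^*\mathcal{L}_{\xi_j}\mathcal{L}_{\xi_j}K\,\diff s$ (using $[B^j,B^k]_t=\delta_{jk}t$). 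The leftover is therefore exactly $\sum_j\int\phi_s^*\mathcal{L}_{\xi_j}K\,\diff B^j + \tfrac12\sum_j[\phi_\cdot^*\mathcal{L}_{\xi_j}K,B^j]_t = \sum_j\int\phi_s^*\mathcal{L}_{\xi_j}K\circ\diff B^j$, giving the claimed pull-back formula. The push-forward formula follows by the identical bookkeeping starting from the It\^o push-forward KIW formula, with $\phi_s^*$ replaced by $(\phi_s)_*$, the single-Lie terms carrying a minus sign, and the $\diff[M^i,B^j]$ contributions now combining as $+\tfrac12-1=-\tfrac12$ before closing up the same way.

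The main obstacle is regularity bookkeeping rather than any conceptual difficulty. Each invocation of the It\^o KIW formula on a Lie-differentiated field ($\mathcal{L}_{\xi_j}K$ to get the martingale part of $\phi_s^*\mathcal{L}_{\xi_j}K$, and $\mathcal{L}_{\xi_j}G_i$ through $[G_i,M^i]$) consumes derivatives of $K$, of the $G_i$ and of the vector fields $b,\xi_j$, and one must verify that every object in the final identity ($\mathcal{L}_bK$, $\mathcal{L}_{\xi_j}K$, $\mathcal{L}_{\xi_j}\mathcal{L}_{\xi_j}K$, $\mathcal{L}_{\xi_j}G_i$ and the Stratonovich integrals $\int\phi_s^*\mathcal{L}_{\xi_j}K\circ\diff B^j$) is a \emph{bona fide} continuous semimartingale field, which also requires justifying the interchange of $\mathcal{L}_{\xi_j}$ with the stochastic integrals defining $K$. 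This is precisely where the stronger regularity hypotheses of the statement are needed, and the count must be redone, with strictly more derivatives, in the push-forward case, consistent with the Remark following the It\^o version.
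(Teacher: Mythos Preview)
Your argument is correct and the bookkeeping checks out: converting the Stratonovich hypothesis to It\^o form, applying the It\^o KIW formula, and then closing back up into Stratonovich integrals using two auxiliary applications of the It\^o KIW formula (to $G_i$ and to $\mathcal{L}_{\xi_j}K$) to identify the needed brackets does produce the claimed identity. The sign/coefficient arithmetic $-\tfrac12+1=\tfrac12$ for the $\diff[M^i,B^j]$ term and the final identification of $\tfrac12\sum_j[\phi_\cdot^*\mathcal{L}_{\xi_j}K,B^j]$ are both right, and your closing paragraph accurately diagnoses where the extra regularity is spent.

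However, the paper proceeds by a genuinely different route that avoids invoking the It\^o KIW theorem altogether. It fixes an arbitrary smooth test tensor $S\in C^\infty(T^{(s,r)}Q)$, forms the \emph{scalar} field $F_t(x)=\langle K(t,x),(\phi_t)_*S(x)\rangle$, and differentiates using the Stratonovich product rule together with Kunita's classical push-forward formula for the \emph{deterministic} tensor $S$. One then applies only the \emph{scalar} Stratonovich It\^o--Wentzell formula to $F_t(\phi_t(x))=\langle\phi_t^*K,S\rangle$ and uses the algebraic identities $\phi_t^*\mathcal{L}_XK=\mathcal{L}_{\phi_t^*X}\phi_t^*K$ and $\mathcal{L}_b\langle K,S\rangle=\langle\mathcal{L}_bK,S\rangle+\langle K,\mathcal{L}_bS\rangle$ to strip off $S$. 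This duality trick exploits the good behaviour of Stratonovich calculus under the chain and Leibniz rules, giving a short coordinate-free computation with no It\^o--Stratonovich conversion at all. Your approach, by contrast, is a systematic reduction to the already-proven It\^o case; it is more mechanical and makes the relationship between the two versions of the theorem explicit, but at the cost of heavier bracket bookkeeping and of needing the It\^o KIW formula (applied to $G_i$ and to $\mathcal{L}_{\xi_j}K$) as an input rather than only the classical scalar and deterministic-tensor formulas.
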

We note that in the case $Q:=\mathbb{R}^n,$ $S_t^1:=t,$ $S_t^i:=W_t^i,$ $i=2,\ldots,M,$ and  $K$ being a $k$-form, the KIW formula for the pull-back was established in \cite{takao2019} by means of a mollifying approach, motivated by the techniques employed in \cite{krylov2011ito} to prove the It\^o-Wentzell formula for distribution-valued processes.

In a related work, \cite{crisan2022variational} proves a version of KIW on tensor fields over manifolds in the general case where the driving process is given by a geometric rough path. However, in their work, they assume $C^\infty$-regularity on all the coefficients, enabling a simplified argument. Moreover, in \cite{catuogno2022geometric}, a  version of KIW on manifolds with rough driving processes is proven under suitable regularity assumptions, but only on trivial vector bundles (i.e., a tuple of scalar fields).
In the present work, we obtain results in the case of general tensor fields with semimartingale driving processes under appropriate regularity conditions, leading to proofs that require a delicate balancing of regularities and a careful treatment of stochasticity. In particular, we demonstrate that the regularities needed to establish the formulas for the pull-back \eqref{KIWkformsimplified} differ from that of the push-forward \eqref{KIWkformsimplifiedpush}, which is not seen in the scalar case. 
Another distinction with the afformentioned line of works is that geometric rough paths only generalise the notion of Stratonovich noise.
We demonstrate that within the Itô formulation, we can establish our results under more refined regularity assumptions compared to the Stratonovich formulation. This subtle nuance remains hidden when working exclusively with geometric rough paths.

This work is part of a larger investigation of stochastic processes defined on manifolds, initiated by the significant works of Kunita \cite{kunita1997stochastic}, Bismut \cite{bismut1982mecanique}, Emery \cite{emery2012stochastic}, Elworthy-Li-LeJan \cite{elworthy1998stochastic, elworthy2007geometry, elworthy2010geometry} and others. Here, our focus is on the interplay of stochastic flows and tensor field valued-semimartingales on manifolds, which have motivations in stochastic geometric fluid mechanics \cite{holm2015variational, takao2019} and corresponding regularisation-by-noise properties, as investigated in \cite{kforms}.


\section{Preliminaries} \label{sec:preliminaries}
We introduce some preliminary concepts and results that will be employed in this article. In particular, in Subsection \ref{holder}, we discuss H\"older spaces on the Euclidean space, in Subsection \ref{back-banach}, we define an extended notion of H\"older spaces to tensor fields over manifolds, in Subsection \ref{sec:tensor-operations} we introduce natural operations defined on the space of tensor fields, namely, pull-back, pushforward and Lie derivative, which we use frequently in this work, and in Subsection \ref{probabilidad}, we introduce some classical results from the theory of stochastic processes. Throughout the text, we follow the convention of using {\bf bold font} to denote vectors/tensors in the Euclidean space (including local coordinate expressions of variables on manifolds) and in {\it italics} otherwise.

\subsection{H\"older spaces on Euclidean domains} \label{holder}
Let $\alpha \in (0,1).$ We say that a Borel-measurable function $f:\mathbb{R}^n \rightarrow \mathbb{R}$ is (locally) $\alpha$-H\"older continuous if for any compact subset $W \subset \mathbb{R}^n$, we have
\begin{align} \label{holder-seminorm}
[f]_{\alpha, W} := \sup_{\vec{x}, \vec{y} \in W} \frac{|f(\vec{x})-f(\vec{y})|}{|\vec{x}-\vec{y}|^{\alpha }}<\infty.
\end{align}
We denote the space of $\alpha$-H\"older continuous functions by $C^\alpha(\mathbb{R}^n),$ which is a Fr\'echet space endowed with the seminorm \eqref{holder-seminorm}.
This definition can be straightforwardly extended to functions of multiple components, in which case we employ the notation $C^\alpha(\mathbb{R}^n,\mathbb{R}^m).$ For $k\in \mathbb{N},$ we define $C^{k+\alpha}(\mathbb{R}^n)$ to be the space of functions $f:\mathbb{R}^n \rightarrow \mathbb{R}$ having continuous derivatives of order $i=0,\ldots,k,$ and $k$-th derivatives of class $C^\alpha(\mathbb{R}
^n).$
Again, this definition can easily be extended to vector-valued functions. As before, $C^{k +\alpha}(\mathbb{R}^n, \mathbb{R}^n)$ defines a Fr\'echet space, whose corresponding seminorm is given by
\begin{align}
[\vec{f}]_{k+\alpha, W} := \sup_{\vec{x} \in W} |\vec{f}(\vec{x})| + \sum_{1 \leq |\vec{v}| < k} \sup_{\vec{x} \in W} |D^{\vec{v}} \vec{f}(\vec{x})| + \sum_{|\vec{v}| = k} \sup_{\vec{x}, \vec{y} \in W} \frac{|D^{\vec{v}} \vec{f}(\vec{x})- D^{\vec{v}}\vec{f}(\vec{y})|}{|\vec{x}-\vec{y}|^{\alpha }}.
\end{align}

\subsection{H\"older sections of tensor bundles over manifolds} \label{back-banach}
The notion of H\"older continuity can be further extended to the manifold setting as we will explain in this section.
Throughout the article, we assume that the manifold $Q$ we work with satisfies the following properties.

\begin{assump}\label{assump:manifold}
The manifold $Q$ is smooth, connected, Hausdorff, second-countable, and paracompact. 
\end{assump}

Given a manifold $Q$ satisfying Assumption \ref{assump:manifold}, we denote the space of $(r,s)$-tensors on $Q$ by $T^{(r,s)}Q$. This is called the $(r,s)$-tensor bundle. We refer to a ``tensor field" as a section of a tensor bundle $T^{(r, s)}Q$, which is a map $\sigma : Q \rightarrow T^{(r, s)}Q$ with the property $\pi \circ \sigma = id_{Q}$, where $\pi : T^{(r, s)}Q \rightarrow Q$ is the canonical projection. We denote the space of $(r,s)$-tensor fields over $Q$ by $\Gamma(T^{(r, s)}Q)$.

\begin{definition}
For $\alpha \in (0,1)$, we say that a tensor field $K \in \Gamma(T^{(r,s)}Q)$ is (locally) $\alpha$-H\"older continuous if on every local chart, the tensor $K$ expressed in coordinates is (locally) $\alpha$-H\"older continuous in the sense defined in Section \ref{holder}.
\end{definition}

The above definition can be checked to be chart-independent, owing to the smoothness of transition maps between charts.
We denote the Fr\'echet space of $\alpha$-H\"older tensor fields by $C^\alpha(T^{(r, s)}Q)$.

\paragraph{\bf $C^k$-class of tensor fields.}
Let $k \in \mathbb{N}.$ We say that a section $K$ of $T^{(r,s)}Q$ is of differentiability class $C^k$ if it is $k$-times differentiable in all the charts of a given atlas. This is equivalent to requiring that each component $K^{i_1,\ldots,i_r}_{j_{1},\ldots,j_{s}}$ of $K$ in a chart is $k$ times differentiable. Again, one can show that this definition is independent of charts, owing to the smoothness of the transition maps. We call the set of all $(r,s)$-tensor fields on $Q$ of class $C^k$ the $C^k$-section of the tensor bundle, which we denote by $C^k(T^{(r,s)}Q)$. This definition can be easily extended to the case $k=\infty,$ in which case we employ the notation $C^\infty(T^{(r,s)}Q)$.


We say that a tensor field $K$ is of H\"older class $C^{k+\alpha}$ if on every chart, the tensor $K$ expressed in coordinates are of class $C^{k+\alpha}$ in the sense defined in Section \ref{holder}. Again, it can be shown that this notion is chart-independent and therefore well-defined.

\subsection{Some operations defined on tensor fields} \label{sec:tensor-operations}

\paragraph{\bf Pull-back and push-forward.}
Let $\varphi : Q \rightarrow Q'$ be a smooth mapping between manifolds. The push-forward $\varphi_* X$ of a vector field $X \in \Gamma(TQ)$ with respect to $\varphi$ is a vector field on $Q'$ whose expression in local coordinates is given by
\begin{align*}
    (\varphi_* X)^i(\vec{\varphi}(\vec{x})) := X^j(\vec{x}) \frac{\partial \varphi^i}{\partial x^j} (\vec{x}).
\end{align*}
Similarly, the pull-back $\varphi^* \alpha$ of a one-form $\alpha \in \Gamma(T^*Q')$ with respect to $\varphi$ is a one-form on $Q$ whose local expression is given by
\begin{align*}
    (\varphi^* \alpha)_i(\vec{x}) := \alpha_j(\vec{\varphi(x)}) \frac{\partial \varphi^j}{\partial x^i}(\vec{x}).
\end{align*}
Furthermore, when $\varphi$ is a diffeomorphism, one can also define the pull-back of a vector field $X \in \Gamma(TQ')$ as $\varphi^* X := (\varphi^{-1})_* X \in \Gamma(TQ)$, and the push-forward of a one-form $\alpha \in T^*Q$ as $\varphi_* \alpha := (\varphi^{-1})^* \alpha \in \Gamma(T^*Q')$.
We can check that these definitions are indeed independent of charts.

This notion can also be extended to arbitrary $(r,s)$-tensor fields. Indeed, for a smooth tensor field $K,$ we define its pull-back $\varphi^* K$ with respect to a diffeomorphism $\varphi$ by
\begin{align*}
    &(\varphi^* K)(x)(\alpha^1(x),\ldots,\alpha^r(x),v_1(x),\ldots,v_s(x)) \\
    &\quad := K(\varphi(x))(\varphi_*\alpha^1(\varphi(x)),\ldots,\varphi_*\alpha^r(\varphi(x)),\varphi_*v_1(\varphi(x)),\ldots,\varphi_*v_s(\varphi(x))),
\end{align*}
for any $\alpha^1,\ldots,\alpha^r \in \Omega^1(Q),$ and $v_1,\ldots,v_s \in \mathfrak X(Q)$.
This has the coordinate expression
\begin{align*}
    (\varphi^* K)^{i_1,\ldots,i_r}_{j_1,\ldots,j_s}(x) &= T^{p_1,\ldots,p_r}_{q_1,\ldots,q_s}(\varphi(x)) \frac{\partial \psi^{i_1}}{\partial x_{p_1}}(\varphi(x)) \cdots \frac{\partial \psi^{i_r}}{\partial x_{p_r}}(\varphi(x)) \frac{\partial \varphi^{q_1}}{\partial x_{j_1}}(x) \cdots \frac{\partial \varphi^{q_s}}{\partial x_{j_s}}(x),
\end{align*}
where $\psi := \varphi^{-1}$. Similarly, we can define the push-forward of an $(r,s)$-tensor field as $\varphi_* K := (\varphi^{-1})^* K$.

\begin{remark} \label{rmk:scalar-pullback}
In the special case $(r,s) = (0,0)$ (i.e., the scalar case), the pull-back of a scalar field $f : Q' \rightarrow \mathbb{R}$ with respect to $\varphi$ is a function $\varphi^*f : Q \rightarrow \mathbb{R}$ defined by $\varphi^*f(x) = f (\varphi(x))$ for all $x \in Q$. Likewise, the push-forward of a scalar is defined by $\varphi_*f(x) = f (\varphi^{-1}(x))$, provided $\varphi$ is a diffeomorphism. Thus, the pull-back/push-forward generalise the notion of ``function evaluation" to tensor fields.
\end{remark}

\paragraph{\bf Lie derivatives of a $C^1$ tensor field.}
Given a vector field $b \in C^1(TQ)$, whose flow we denote by $(\phi_t)_{t \in [0,T]}$, we define the Lie derivative of $K \in C^1(T^{(r,s)}Q)$ with respect to $b$ as
\begin{align}
    \mathcal L_b K := \left.\frac{\diff}{\diff t}\right|_{t=0} \phi_t^* K.
\end{align}
In local coordinates, this reads
\begin{align} \label{Lieformula}
\begin{split}
        (\mathcal L_b K(\vec{x}))^{i_1, \ldots, i_r}_{j_1, \ldots, j_s} &=  b^l(\vec{x})\frac{\partial K^{i_1,\ldots,i_r}_{j_1, \ldots, j_s}}{\partial x_l}(\vec{x}) - K^{l, i_2,\ldots,i_r}_{j_1, \ldots, j_s}(\vec{x}) \frac{\partial b^{i_1}}{\partial x_l}(\vec{x}) - \cdots - K^{i_1,\ldots,i_{r-1}, l}_{j_1, \ldots, j_s}(\vec{x}) \frac{\partial b^{i_r}}{\partial x_l}(\vec{x}) \\
        & \quad + K^{i_1,\ldots,i_r}_{l, j_2, \ldots, j_s}(\vec{x}) \frac{\partial b^{l}}{\partial x_{j_1}}(\vec{x}) + \cdots + K^{i_1,\ldots,i_r}_{j_1, \ldots, j_{s-1}, l}(\vec{x}) \frac{\partial b^{l}}{\partial x^{j_s}}(\vec{x}),
\end{split}
\end{align}
where sum over repeated indices is implied. 

\subsection{Some concepts from stochastic analysis} \label{probabilidad}
In this subsection, we present some results from stochastic analysis and definitions that we will use. 
In particular, we provide the definition of a Brownian flow of diffeomorphisms and some related results. We also introduce the classical It\^o-Wentzell formula and Kunita's geometric extensions of It\^o's formulas.

\paragraph{\bf Notations and considerations.}
In this subsection and in general along this article, we assume that we are working with a filtered probability space $(\Omega, \mathcal{F},\mathbb{P},(\mathcal{F}_t)_{t \in I})$ satisfying the usual conditions, where $I$ might be $[0,T]$ or $[0,\infty),$ together with a family $\{(S_t^i)_{t\in I}\}_{i=1}^N$ of continuous semimartingales adapted to $(\mathcal{F}_t)_{t \in I}$. When we work with product of measure spaces, we assume that they have been completed. Sometimes we might consider other families of continuous semimartingales or of Brownian motions $\{(W_t^j)_{t \in I}\}_{j=1}^M$, adapted to the same filtration. When considering a continuous semimartingale $(S_t)_{t \in I},$ we will assume without loss of generality that $S_0 =0.$ The notation $(\mathcal{F}_t)_{t \in I}$ is usually dropped and we write $(\mathcal{F}_t)_{t}$ instead. Also, for semimartingales (or stochastic processes) we normally write $S_t$ instead of $(S_t)_{t \in I}$. Given two continuous semimartingales $X_t$ and $Y_t,$ we denote their co-variation by $[X,Y]_t$ and their respective quadratic variations by $[X]_t, [Y]_t,$ $t \in I.$ The quadratic variation $[X]_t$ of a continuous semimartingale is a pathwise continuous nondecreasing process of bounded variation. 
We are interested in working with the solution $\phi_{s,t}$ of the equation \footnote{It might seem unnatural to start by stating the SDE in Stratonovich form since more regularity conditions will be needed than in It\^o form, but this is the natural setting for our equations.}
\begin{align} \label{flownotation}
    \phi_{s,t}(x) = x + \int^t_s b(r,\phi_{s,r}(x)) \, \diff r + \sum_{i=1}^N \int^t_s \xi_i(r,\phi_{s,r}(x)) \circ \diff W_r^i,\hspace{0.2cm} 0 \leq s \leq t \leq T,\quad x \in Q,
\end{align}
where $b:Q \rightarrow TQ,$ $\xi_i:Q \rightarrow TQ$ are vector fields, $W^i_t$ are Brownian motions, $i=1,\ldots,N,$ and $``\circ"$ represents Stratonovich integration. The solution to equation \eqref{flownotation} turns out to define a stochastic flow of diffeomorphisms under some assumptions on the coefficients \cite{kunita1997stochastic}. Oftentimes we consider $s=0$ in equation \eqref{flownotation}, obtaining the (uniparametric) SDE 
\begin{align} \label{flow-map}
    \phi_{t}(x) = x + \int^t_0 b(s,\phi_{s}(x)) \, \diff s + \sum_{i=1}^N \int^t_0 \xi_i(s,\phi_{s}(x)) \circ \diff W_s^i,\hspace{0.2cm} 0 \leq t \leq T,\quad x \in Q.
\end{align}
To explain exactly what we mean by ``stochastic flow of diffeomorphisms'', we introduce an important definition inspired by \cite{itowentzell1,kunita1997stochastic}.

\begin{definition}[Stochastic flow of $C^k$-diffeomorphisms] \label{stochasticflow}
Let $k \in \mathbb{N},$ $T>0$ and $(\Omega, \mathcal{F},\mathbb{P},(\mathcal{F}_t)_{t})$ be a filtered probability space satisfying the usual conditions together with a family of independent Brownian motions $\{W_t^i\}_{i=1}^N,$ $t \in [0,T]$ (which we can interpret as standard $N$-dimensional Brownian motion) adapted to $(\mathcal{F}_t)_{t},$ and denote by $(\mathcal{F}_{s,t})_{s,t}$ the completed sigma-algebra generated by $\{W^i_u - W^i_r\}_{i=1}^N$, $s \leq r \leq u \leq t,$ for each $0\leq s < t \leq T$. 
Let $k \in \mathbb{N}.$ An $\mathcal{F}_{s,t}$-measurable random field $\varphi_{s,t}: \Omega \times Q \rightarrow Q,$ $0\leq s \leq t \leq T$ solving equation $\eqref{flownotation}$ is called a stochastic flow of $C^k$-diffeomorphisms if $\mathbb{P}$-a.s. the following properties are satisfied
\begin{itemize}
    \item $\varphi_{s,r} = \varphi_{t,r} \circ \varphi_{s,t},$ for all $0\leq s \leq t \leq r \leq T$,
    \item $\varphi_{s,s}(\cdot) = id_{Q},$ for all $0 \leq s \leq T$,
    \item $\varphi_{s,t}(\cdot)$ is a $C^k$-diffeomorphism, $0\leq s \leq t \leq T.$ Moreover, on every local chart $U$ such that $\varphi_{s,t}(x) \in U$, the spatial derivatives $D^l\vec{\varphi}_{s,t}(\vec{x}),$ $D^l\vec{\varphi}_{s,t}^{-1}(\vec{x})$ are continuous in $(s,t,\vec{x})$ for all $|l| \leq k$.
\end{itemize}
\end{definition}

\begin{definition}[$C^k$-semimartingales]
Let $k \in \mathbb{N}$. A continuous local martingale $M_t(x),$ $t \in [0,T],$ $x \in Q,$ with covariance $C(t,x,y):= [M_{\cdot}(x), M_{\cdot}(y)]_t,$ $t \in [0,T],$ $x,y \in Q,$ is said to be a $C^{k}$-local martingale if (i) $M_t(\cdot)$ and $C(t,\cdot,\cdot)$ have modifications that are $C^{k}$-smooth in space, 
and (ii) on every local chart, the processes $D^l M_t(\vec{x})$ are continuous local martingales with covariance $D^l_{\vec{x}} D^l_{\vec{x}} C(t,\vec{x},\vec{y}),$ for all multi-indices with $\lvert l \rvert \leq k$.
A continuous process $S_t(x) = A_t(x) + M_t(x)$, expressed as the sum of a continuous process of bounded variation $A_t(x)$ and a continuous local martingale $M_t(x)$, is said to be a $C^{k}$-semimartingale if (i) $A_t(\cdot)$ is a continuous $C^{k}$-process such that the processes $D^l A_t(\vec{x})$ (on any local chart) have bounded variation, for all $\lvert l \rvert \leq k$, and (ii) $M_t$ is a $C^{k}$-local martingale.
\end{definition}

We consider the following assumptions on the drift and diffusion vector fields of the SDE \eqref{flownotation}.

\begin{assump} \label{assumpstrong}{\ } 
Let $\alpha \in (0,1)$ and $k\in \mathbb{N}$. Let $b : [0,T] \rightarrow \Gamma(TQ)$ and $\xi_i : [0,T] \rightarrow \Gamma(TQ)$ for $i=1, \ldots, N$ be time-dependent deterministic vector fields satisfying:
\begin{enumerate}
    \item $b(t, \cdot) \in C^{k+\alpha}(TQ)$ and $\xi_i(t, \cdot) \in C^{k+1+\alpha} \cap C^{2}(TQ)$ for all $t \in [0, T]$ and $i = 1, \ldots, N$.
    \item For all $x \in Q$, $\xi_i(\cdot, x) \in C^{1}([0,T], T_xQ)$ for all $i = 1, \ldots, N$ \footnote{The $C^1$ in time assumption is necessary to convert from It\^o to Stratonovich form.}. 
    \item On every local chart $(U, \phi)$ and for every compact subset $W \subset U$, we have
    $$
    \int^T_0 \left([\vec{b}(t, \cdot)]_{k+\alpha, W} + \sum_{i=1}^N [\vec{\xi}_i(t, \cdot)]_{k+1+\alpha, W}^2\right) \diff t < \infty,
    $$
    where $\vec{b}, (\vec{\xi}_i)_{i=1}^N$ are the coordinate expressions of $b, (\xi_i)_{i=1}^N$ respectively on $U$.
\end{enumerate}
\end{assump}

Under this assumption, we have the following existence-and-uniqueness result of \eqref{flownotation}.

\begin{proposition}[{\cite[Theorem 8.3]{kunita1984stochastic}}] \label{eq:to-blow-or-no-blow}
Under Assumption \ref{assumpstrong}, for all $x \in Q$, there exists a stopping time $\tau(s, x) > s$ such that \eqref{flownotation} admits a unique solution $\varphi_{t,s}(x)$ for $0 \leq s \leq t < \tau(s, x)$ and $\mathbb{P}$-a.s., one of either the following two scenarios occurs:
\begin{enumerate}
    \item (Blow-up scenario) $Q$ is non-compact, $\tau(s, x) < T$ and $\lim_{t \rightarrow \tau(s, x)} \varphi_{t,s}(x) = ``\infty"$, where $``\infty"$ is the point at infinity in the one-point compactification of $Q$.
    \item (No blow-up scenario) $Q$ is either compact or non-compact, $\lim_{t \rightarrow \tau(s, x)} \varphi_{t,s}(x)$ exists and belongs to $Q$, and $\tau(s, x) = T$.
\end{enumerate}
Moreover, by \cite[Theorem 9.2]{kunita1984stochastic}, $\varphi_{t,s}$ has a modification that is a stochastic flow of $C^{k+\alpha'}$-diffeomorphisms for any $\alpha' \in (0,\alpha)$.
\end{proposition}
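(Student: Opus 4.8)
The plan is to reduce the SDE \eqref{flownotation} on the manifold $Q$ to a family of Euclidean SDEs on coordinate charts and then invoke Kunita's Euclidean existence, uniqueness, and flow results \cite{kunita1984stochastic}. First I would rewrite \eqref{flownotation} in It\^o form: since each $\xi_i(t,\cdot) \in C^{k+1+\alpha}\cap C^2(TQ)$ and $\xi_i(\cdot,x)\in C^1([0,T],T_xQ)$, the Stratonovich-to-It\^o correction $\tfrac12\sum_i \nabla_{\xi_i}\xi_i$ is well defined and lies in $C^{k+\alpha}(TQ)$ (one spatial derivative is lost), so the It\^o drift $\tilde b := b + \tfrac12\sum_i\nabla_{\xi_i}\xi_i$ retains the $C^{k+\alpha}$ regularity of $b$, and the time-integrability in Assumption \ref{assumpstrong}(3) is preserved.

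Next, by Assumption \ref{assump:manifold} the manifold is second-countable and paracompact, so it admits a countable atlas $\{(U_n,\varphi_n)\}$. On each chart the SDE becomes a Euclidean It\^o SDE whose coefficients are locally $C^{k+\alpha}$, hence locally Lipschitz (as $k\ge 1$, the coefficients are $C^1$), and satisfy the time-integrability hypothesis of \cite[Theorem 8.3]{kunita1984stochastic}. This yields, for each starting point, a unique local solution up to the first exit time from the chart; pathwise uniqueness lets me glue these local solutions across overlapping charts into a unique maximal solution on a random interval $[s,\tau(s,x))$, where $\tau(s,x)>s$ is the explosion time. The dichotomy is then the standard ``explosion equals escape to infinity'' argument: if along a subsequence $t_m\uparrow\tau(s,x)$ the points $\varphi_{t_m,s}(x)$ remained in a fixed compact $K\subset Q$, then $K$ is covered by finitely many charts on which the coefficients are uniformly well behaved, and one could restart the solution, contradicting maximality; hence $\varphi_{t,s}(x)$ must leave every compact set, i.e.\ converge to the point at infinity in the one-point compactification, which can only occur when $Q$ is non-compact and $\tau(s,x)<T$, and otherwise $\tau(s,x)=T$.

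Finally, for the flow and regularity statement I would apply \cite[Theorem 9.2]{kunita1984stochastic} chart by chart: Kunita's moment estimates on the increments of the solution and of its spatial derivatives, combined with the Kolmogorov continuity criterion, give a modification that is jointly continuous in $(s,t,x)$ together with its spatial derivatives up to order $k$ and $\alpha'$-H\"older in the top derivative for any $\alpha'\in(0,\alpha)$ (the endpoint is lost, as is typical in the Kolmogorov argument); the cocycle property $\varphi_{s,r}=\varphi_{t,r}\circ\varphi_{s,t}$ and $\varphi_{s,s}=\mathrm{id}_Q$ follow from uniqueness, and invertibility with the same regularity for $\varphi_{s,t}^{-1}$ follows either from solving the equation backwards in the initial variable or from Kunita's direct construction of the inverse flow. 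Patching these chart-wise modifications (again using uniqueness on overlaps, together with smoothness of the transition maps) produces the global stochastic flow of $C^{k+\alpha'}$-diffeomorphisms.

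The \emph{main obstacle} I expect is not the existence and uniqueness as such, but the bookkeeping needed to make the chart-patching fully rigorous while tracking the loss of regularity: one must ensure the Kolmogorov estimates are uniform on compact subsets of each chart, that the $C^{k+\alpha'}$ modifications obtained on different charts are genuinely consistent on overlaps, and that the characterization of $\tau(s,x)$ via escape from every compact set is compatible with the one-point compactification regardless of whether $Q$ is compact. Since most of these estimates are already packaged in \cite{kunita1984stochastic}, the task reduces to a careful localization argument rather than the derivation of new bounds.
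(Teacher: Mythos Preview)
The paper does not supply its own proof of this proposition; it is stated purely as a citation of Kunita's results \cite[Theorems 8.3 and 9.2]{kunita1984stochastic}, so there is no ``paper's proof'' to compare against. Your sketch is essentially the standard argument (and the one Kunita himself uses): convert to It\^o form, localise to charts, apply the Euclidean theory, and patch via pathwise uniqueness with the usual escape-from-compacts dichotomy for the explosion time. Incidentally, the chart-patching bookkeeping you flag as the main obstacle is spelled out in Appendix~\ref{appendix:time-extension} of the paper (following \cite[Lemma 4.8.2]{kunita1997stochastic}), though there it is used to extend the KIW formula to the maximal interval rather than to prove Proposition~\ref{eq:to-blow-or-no-blow} itself.
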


\begin{remark}
Hereafter, when $(s,x) \in [0,T] \times Q$ is fixed, we sometimes employ the shorthand notation $\tau$ to denote the stopping time $\tau(s,x)$.
\end{remark}

In \cite{kunita1997stochastic}, Kunita provides the following extensions of It\^o's formula to tensor fields over manifolds.
\begin{theorem} \label{ito-eq}
Let $K \in C^3(T^{(r,s)}Q)$ and $T>0.$ Let Assumption \ref{assumpstrong} hold with $k=4$ for the drift and diffusion vector fields in \eqref{flownotation} and let $s < \tau \leq T$ be the corresponding maximal existence stopping time given by Proposition \ref{eq:to-blow-or-no-blow}. Then It\^o's second formula (i.e., It\^o's formula with respect to the final time variable) for tensor fields states that $\mathbb{P}$-a.s. for $0 \leq s \leq t < \tau,$
\begin{align}
&(\phi_{s,t})^* K-K = \int_s^t (\phi_{s,r})^* (\mathcal{L}_{b} K) \, \diff r
+  \sum_{i=1}^N \int_s^t (\phi_{s,r})^* (\mathcal{L}_{\xi_i} K) \, \diff W_r^i -  \frac{1}{2} \sum_{i=1}^N \int_s^t (\phi_{s,r})^* (\mathcal{L}^2_{\xi_i} K) \, \diff r, \label{ito-second-1}\\
&(\phi_{t,s})^* K-K = -\int_s^t (\phi_{t,r})^* (\mathcal{L}_{b} K) \, \diff r
-  \sum_{i=1}^N  \int_s^t (\phi_{t,r})^* (\mathcal{L}_{\xi_i} K) \, \diff \widehat{W}^i_r + \frac{1}{2} \sum_{i=1}^N  \int_s^t (\phi_{t,r})^* (\mathcal{L}^2_{\xi_i} K) \, \diff r. \label{ito-second-2}
\end{align}
Likewise, It\^o's first formula (i.e., It\^o's formula with respect to the initial time variable) reads
\begin{align}
&(\phi_{s,t})^* K-K = \int_s^t \mathcal{L}_{b} ((\phi_{r,t})^*K) \, \diff r
+ \sum_{i=1}^N \int_s^t \mathcal{L}_{\xi_i} ((\phi_{r,t})^* K) \, \diff \widehat{W}^i_r -  \frac{1}{2}  \sum_{i=1}^N \int_s^t \mathcal{L}^2_{\xi_i} ((\phi_{r,t})^* K) \, \diff r, \label{ito-first-1}\\
&(\phi_{t,s})^* K-K = -\int_s^t \mathcal{L}_{b} ((\phi_{r,s})^*K) \, \diff r
-   \sum_{i=1}^N \int_s^t \mathcal{L}_{\xi_i} ((\phi_{r,s})^* K) \, \diff W^i_r + \frac{1}{2} \sum_{i=1}^N \int_s^t \mathcal{L}^2_{\xi_i} ((\phi_{r,s})^* K) \, \diff r, \label{ito-first-2}
\end{align}
where $\circ \diff \widehat{W}^i_r$ denotes Stratonovich integration with respect to the backward Brownian motion. 
\end{theorem}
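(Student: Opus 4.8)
\noindent\emph{Proof strategy.} The plan is to reduce all four identities to explicit It\^o computations performed in a single coordinate chart. Since the pull-back, the push-forward and the Lie derivative are all defined chart-by-chart, and since the assertions are local in $x$ and need only be checked up to the explosion time $\tau$ of Proposition \ref{eq:to-blow-or-no-blow}, it suffices to work with $\mathbb{R}^n$-valued processes on a fixed chart. There the coordinate formula of Section \ref{sec:tensor-operations} exhibits $(\phi_{s,t})^*K(x)$ as a fixed polynomial in the entries of the Jacobian $J_{s,t}(x):=D\phi_{s,t}(x)$ and of its inverse $J_{s,t}(x)^{-1}$, with coefficients the components of $K$ evaluated at $\phi_{s,t}(x)$. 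Under Assumption \ref{assumpstrong} with $k=4$ together with $K\in C^{3}(T^{(r,s)}Q)$, Proposition \ref{eq:to-blow-or-no-blow} guarantees that, up to $\tau$, $t\mapsto\phi_{s,t}$ is a stochastic flow of $C^{k+\alpha'}$-diffeomorphisms, so that $J_{s,t}$, $J_{s,t}^{-1}$ and $K\circ\phi_{s,t}$ together with its first and second derivatives are all continuous semimartingales in $t$; this is precisely the regularity that legitimises the computation below, and it is why the hypotheses are as strong as stated.

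First I would record, in It\^o form, the SDEs satisfied by the building blocks: converting \eqref{flownotation} gives $\diff_t\phi_{s,t}=\bigl(b+\tfrac12\sum_i(\xi_i\cdot\nabla)\xi_i\bigr)(\phi_{s,t})\,\diff t+\sum_i\xi_i(\phi_{s,t})\,\diff W^i_t$; differentiating this identity in $x$ yields the (linear) first-variation equation for $J_{s,t}$, where one spatial derivative of $b,\xi_i$ is spent; It\^o's formula for matrix inversion then gives the SDE for $J_{s,t}^{-1}$; and It\^o's formula applied to the smooth maps $K$ and $\partial_l K$ along the semimartingale $\phi_{s,t}$ gives the SDEs for $K\circ\phi_{s,t}$ and $\partial_l K\circ\phi_{s,t}$, bringing in the second derivatives of $K$.

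Next I would assemble these via the It\^o product rule applied to the coordinate expression of $(\phi_{s,t})^*K$. The bounded-variation part separates into the drift contributions — which, after regrouping with the coordinate Lie-derivative formula \eqref{Lieformula} and the derivation property of $\mathcal{L}_b$, reproduce $(\phi_{s,t})^*(\mathcal{L}_bK)\,\diff t$ plus leftover $\tfrac12\sum_i(\xi_i\cdot\nabla)\xi_i$-terms — and the quadratic-covariation contributions obtained by pairing the martingale parts of the various factors; these two families then combine into $\pm\tfrac12\sum_i(\phi_{s,t})^*(\mathcal{L}_{\xi_i}^2K)\,\diff t$, while the martingale part collects into $\sum_i(\phi_{s,t})^*(\mathcal{L}_{\xi_i}K)\,\diff W^i_t$. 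The most transparent route through this bookkeeping is to prove the identity first for scalars (a one-line It\^o computation), then for vector fields and one-forms, and finally to bootstrap to an arbitrary $(r,s)$-tensor using that $\varphi^*$ commutes with tensor products and contractions, together with the observation that the cross-covariation of two martingale factors produces exactly the term $2\,\mathcal{L}_{\xi_i}(\cdot)\otimes\mathcal{L}_{\xi_i}(\cdot)$ needed for $\mathcal{L}_{\xi_i}^2$ to act correctly on a product — a stochastic Leibniz rule.

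Finally, the remaining three identities follow by symmetry. The ``first'' formula \eqref{ito-first-1} (differentiation in the initial time) is obtained either by repeating the computation for $r\mapsto\phi_{r,t}$, which is now a backward semimartingale, or, more economically, from the cocycle identity $\phi_{r,t}=\phi_{s,t}\circ\phi_{r,s}$ and functoriality $(\phi_{r,t})^*=(\phi_{r,s})^*\circ(\phi_{s,t})^*$, differentiated in $r$. The backward identities \eqref{ito-second-2} and \eqref{ito-first-2} are obtained by the same scheme applied to the backward flow $\phi_{t,s}=\phi_{s,t}^{-1}$, using the SDE for the inverse flow and backward Stratonovich integration. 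The step I expect to be the main obstacle is the third one: verifying that the second-order It\^o corrections assemble \emph{exactly} into $\pm\tfrac12(\phi_{s,t})^*\mathcal{L}_{\xi_i}^2K$ for an arbitrary tensor type, and rigorously justifying the differentiation of the flow in its initial condition along with the semimartingale regularity of $J_{s,t}$, $J_{s,t}^{-1}$ and of $K\circ\phi_{s,t}$ and its derivatives. The scalar case is essentially immediate; all of the real work lies in the tensorial extension, and the derivation/Leibniz reduction is what keeps it tractable.
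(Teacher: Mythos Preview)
The paper does not give its own proof of this theorem: it is stated in the preliminaries and attributed to Kunita \cite{kunita1997stochastic}. That said, your strategy is correct and coincides with the computation the paper carries out in full for its main result (Theorem~\ref{thm:IW-tensor-Ito-ver}), which specialises to the present statement when $K$ is deterministic. There the authors pair $\phi_t^*K$ with an arbitrary smooth test tensor $S$, express $\langle\phi_t^*K,S\rangle$ in local coordinates as a product of entries of $D\phi_t$, $(D\phi_t)^{-1}$ and components of $K\circ\phi_t$, derive the It\^o SDEs for each of these factors (equations \eqref{eq:D-phi-ito}--\eqref{D-psi-split}), apply the It\^o product rule, and identify the resulting expressions with the coordinate formula \eqref{Lieformula} for $\mathcal{L}_bK$ and $\mathcal{L}_\xi^2K$ --- exactly your plan.

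The only methodological difference is that the paper performs the general $(r,s)$-tensor computation in one shot rather than bootstrapping from scalars and vector fields/one-forms via the derivation property of $\mathcal{L}$ and the compatibility of pull-back with tensor products. Both routes are valid; yours trades a longer induction for shorter individual steps, while the paper's direct computation makes the cancellation producing $\tfrac12\phi^*\mathcal{L}_\xi^2K$ visible all at once. Your identification of that cancellation as the main obstacle is accurate: in the paper it occupies the bulk of the proof of Theorem~\ref{thm:IW-tensor-Ito-ver}.
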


By Remark \ref{rmk:scalar-pullback}, we can check that \eqref{ito-second-1} is a natural generalisation of the classic It\^o's formula \eqref{eq:og-ito-formula}.

The following two theorems can be derived as corollaries of the classical It\^o-Wentzell theorems in \cite{kunita1981some}. The first one is the It\^o version and the second one is its Stratonovich equivalent.
Note that these classical results are stated on the Euclidean space and not on a general manifold.

\begin{theorem}[It\^o-Wentzell formula for flows in It\^o form] \label{itoflows}
Let $f:\Omega \times [0,T] \times \mathbb{R}^n \rightarrow \mathbb{R}$ be a semimartingale satisfying 
\begin{enumerate}[(a)] 
\item $\mathbb{P}$-a.s. $f$ is continuous in $(t,\vec{x}),$
\item $f(t,\cdot)$ is of class $C^2$ a.s. for every $t \in [0,T].$
\end{enumerate}
Moreover, let $S^i_t=A^i_t+M^i_t$ be continuous semimartingales expressed uniquely as the sum of a continuous process of bounded variation and a continuous local martingale, $t \in [0,T],$ $i=1,\ldots,M$. Let $f$ have the explicit form
\begin{align*}
f(t,\vec{x}) = f(0,\vec{x}) + \sum_{i=1}^M  \int_0^t g_i(s,\vec{x})\, \diff A^i_s + \sum_{i=1}^M \int_0^t g_i(s,\vec{x}) \, \diff M^i_s, \quad t \in [0,T],\quad \vec{x} \in \mathbb{R}^n,
\end{align*}
where $g_i$ are adapted processes satisfying
\begin{enumerate}[(1)]
    \item $\mathbb{P}$-a.s. $g_i$ are continuous in $(t,\vec{x}),$
    \item $g_i(t,\cdot)$ are of class $C^1$ a.s. for every $t \in [0,T]$,
\end{enumerate}
$i=1,\ldots,M$. Finally, let $\{\vec{\phi}_t\}_{t \in [0,\tau)}$ for some stopping time $0 < \tau \leq T$, be the (semimartingale) solution to the SDE \eqref{flow-map} with drift and diffusion vector fields satisfying Assumption \ref{assumpstrong} with $k=1.$ Then $\mathbb{P}$-a.s.
\begin{align*}
    & f(t,\vec{\phi}_t(\vec{x})) =  f(0,\vec{x}) + \sum_{i=1}^M \int^t_0 g_i(s,\vec{\phi}_s(\vec{x})) \, \diff A_s^i + \sum_{i=1}^M \int^t_0 g_i(s,\vec{\phi}_s(\vec{x})) \, \diff M_s^i \nonumber \\
    & \hspace{50pt} + \int^t_0 \mathcal (b \cdot \nabla f) (s,\vec{\phi}_s(\vec{x})) \, \diff s 
    + \sum_{j=1}^N \int^t_0 (\xi_j \cdot \nabla f) (s,\vec{\phi}_s(\vec{x})) \, \diff B_s^j \nonumber \\
    &+ \sum_{i=1}^M \sum_{j=1}^N \int^t_0 (\xi_j \cdot \nabla g_i)(s,\vec{\phi}_s(\vec{x})) \, \diff [M_{\cdot}^i, B_{\cdot}^j]_s + \frac12 \sum_{j=1}^N  \int^t_0 \mathcal (\xi_j \cdot \nabla (\xi_j \cdot \nabla f)) (s,\vec{\phi}_s(\vec{x})) \, \diff s, 
\end{align*}
holds for all $t \in [0,\tau)$ and $\vec{x} \in \R^n.$ 
\end{theorem}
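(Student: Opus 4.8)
The plan is to apply the classical Itô–Wentzell formula (as stated by Kunita in \cite{kunita1981some}) to the composition $f(t,\vec{\phi}_t(\vec{x}))$, using the semimartingale decomposition of the flow $\vec{\phi}_t$ as the ``inner'' process and the semimartingale field $f$ as the ``outer'' field. The first step is to convert the Stratonovich SDE \eqref{flow-map} for $\vec{\phi}_t$ into Itô form; under Assumption \ref{assumpstrong} with $k=1$ (so $b(t,\cdot)\in C^{1+\alpha}$ and $\xi_i(t,\cdot)\in C^{2+\alpha}\cap C^2$, with the $C^1$-in-time hypothesis on $\xi_i$), the correction term $\tfrac12\sum_j (\xi_j\cdot\nabla)\xi_j$ is well-defined and continuous, so we may write $\vec{\phi}_t$ as an Itô process with drift $b + \tfrac12\sum_j(\xi_j\cdot\nabla)\xi_j$ and diffusion coefficients $\xi_j$ evaluated along the flow. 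Then for fixed $\vec{x}$, $\vec{M}_t := \vec{\phi}_t(\vec{x})$ is a continuous semimartingale on $[0,\tau)$, and its local martingale part has covariation $\diff[M^k_\cdot, M^l_\cdot]_s = \sum_j \xi_j^k(s,\vec{\phi}_s(\vec{x}))\xi_j^l(s,\vec{\phi}_s(\vec{x}))\,\diff s$ and $\diff[M^k_\cdot, B^j_\cdot]_s = \xi_j^k(s,\vec{\phi}_s(\vec{x}))\,\diff s$; the finite-variation part of $S^i$ contributes no covariation with $\vec{M}$.

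The second step is to verify that the regularity hypotheses of the classical Itô–Wentzell formula are met: $f$ is a.s.\ continuous in $(t,\vec{x})$ and $f(t,\cdot)\in C^2$, with martingale-part integrand $g_i$ a.s.\ continuous and $g_i(t,\cdot)\in C^1$ — exactly the conditions needed so that the terms $\tfrac{\partial f}{\partial x_k}$ and $\tfrac{\partial g_i}{\partial x_k}$ appearing in the formula make sense, and so that $\sum_j \tfrac{\partial^2 f}{\partial x_k\partial x_l}\xi_j^k\xi_j^l$ is well-defined. Substituting the covariation identities from Step 1 into the classical formula, the term $\sum_k \int \tfrac{\partial f}{\partial x_k}(s,\vec{M}_s)\,\diff M^k_s$ splits, via the Itô decomposition of $\vec{\phi}$, into $\int (b\cdot\nabla f)(s,\vec{\phi}_s)\,\diff s + \tfrac12\sum_j\int((\xi_j\cdot\nabla)\xi_j\cdot\nabla f)(s,\vec{\phi}_s)\,\diff s + \sum_j\int(\xi_j\cdot\nabla f)(s,\vec{\phi}_s)\,\diff B^j_s$; the mixed-covariation term $\sum_{k}\int\tfrac{\partial g_i}{\partial x_k}\,\diff[M^i_\cdot,M^k_\cdot]_s$ becomes $\sum_j\int(\xi_j\cdot\nabla g_i)(s,\vec{\phi}_s)\,\diff[M^i_\cdot,B^j_\cdot]_s$; and the second-order term $\tfrac12\sum_{k,l}\int\tfrac{\partial^2 f}{\partial x_k\partial x_l}\,\diff[M^k_\cdot,M^l_\cdot]_s$ becomes $\tfrac12\sum_j\int\sum_{k,l}\tfrac{\partial^2 f}{\partial x_k\partial x_l}\xi_j^k\xi_j^l\,\diff s$. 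Collecting the two contributions proportional to $\diff s$ coming from the first-order and second-order terms, one recognises $\tfrac12\sum_j\big((\xi_j\cdot\nabla)\xi_j\cdot\nabla f + \sum_{k,l}\tfrac{\partial^2 f}{\partial x_k\partial x_l}\xi_j^k\xi_j^l\big) = \tfrac12\sum_j\xi_j\cdot\nabla(\xi_j\cdot\nabla f)$ by the product rule, which yields precisely the stated formula.

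The main obstacle — really the only nontrivial point — is the \emph{localization}: the classical Itô–Wentzell formula is stated for semimartingales defined on all of $[0,T]$, whereas $\vec{\phi}_t$ is only defined up to the (possibly finite) explosion time $\tau$. To handle this I would introduce, for each $m\in\mathbb{N}$, the stopping times $\tau_m := \inf\{t : |\vec{\phi}_t(\vec{x})| \ge m\}\wedge(\tau - 1/m)$, so that $\vec{\phi}_{t\wedge\tau_m}$ stays in a fixed compact set and can be extended to a semimartingale on all of $[0,T]$ (e.g.\ by freezing it after $\tau_m$); on this compact set the relevant derivatives of $f, g_i, b, \xi_j$ are bounded by the local Hölder/$C^k$ bounds and the integrability condition in Assumption \ref{assumpstrong}(3). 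Apply the classical formula up to $\tau_m$, obtaining the identity on $[0,\tau_m]$, and then let $m\to\infty$: since $\tau_m \uparrow \tau$ a.s.\ (using continuity of $\vec{\phi}$ on $[0,\tau)$ from Proposition \ref{eq:to-blow-or-no-blow}) and all integrands are a.s.\ continuous, each stochastic and Lebesgue integral on $[0,t\wedge\tau_m]$ converges to the corresponding integral on $[0,t]$ for $t<\tau$, giving the formula for all $t\in[0,\tau)$. One should also remark that the formula holds simultaneously for all $\vec{x}$ and $t$ off a single null set, which follows from continuity in $(t,\vec{x})$ of both sides together with a countable dense argument.
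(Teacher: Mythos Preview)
Your proposal is correct and matches the paper's own treatment: the paper does not give a detailed proof of Theorem~\ref{itoflows} but simply states that it ``can be derived as [a corollary] of the classical It\^o-Wentzell theorems in \cite{kunita1981some},'' which is precisely what you do by taking $\vec{M}_t=\vec{\phi}_t(\vec{x})$, converting \eqref{flow-map} to It\^o form, and collecting the covariation terms. Your localisation argument via $\tau_m\uparrow\tau$ is a standard and appropriate way to handle the stopping time, and the algebraic identification $\tfrac12\sum_j\big((\xi_j\cdot\nabla)\xi_j\cdot\nabla f+\xi_j^k\xi_j^l\partial_k\partial_l f\big)=\tfrac12\sum_j\xi_j\cdot\nabla(\xi_j\cdot\nabla f)$ is exactly right.
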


\begin{theorem}[It\^o-Wentzell formula for flows in Stratonovich form]  \label{straflows}
Let $f:\Omega \times [0,T] \times \mathbb{R}^n \rightarrow \mathbb{R}$ be a semimartingale satisfying
\begin{enumerate}[(a)] 
\item $\mathbb{P}$-a.s. $f$ is continuous in $(t,\vec{x}),$ \label{one1}
\item $f(t,\cdot)$ is of class $C^3$ a.s. for every $t \in [0,T].$ \label{one2}
\end{enumerate}
Moreover, let $S^i_t=A^i_t+M^i_t$ be continuous semimartingales expressed uniquely as the sum of a continuous process of bounded variation and a continuous local martingale, $t \in [0,T],$ $i=1,\ldots,M$. Let $f$ have the explicit form
\begin{align*}
f(t,\vec{x}) = f(0,\vec{x}) + \sum_{i=1}^M  \int_0^t g_i(s,\vec{x})\, \diff A^i_s + \sum_{i=1}^M \int_0^t g_i(s,\vec{x}) \circ \diff M^i_s, \quad t \in [0,T],\quad \vec{x} \in \mathbb{R}^n,
\end{align*}
where $g_i$ are adapted processes satisfying
\begin{enumerate}[(1)]
    \item $\mathbb{P}$-a.s. $g_i$ are continuous in $(t,\vec{x}),$ \label{two1}
    \item $g_i(t,\cdot)$ are of class $C^2$ a.s. for every $t \in [0,T]$. \label{two2}
\end{enumerate}
Finally, assume the representations
\begin{align*}
& g_i(t,\vec{x}) = g_i(0,\vec{x}) +  \sum_{j=1}^L \int_0^t a_{ij}(s,\vec{x}) \circ \diff M^{ij}_s, \quad t \in [0,T], \quad \vec{x} \in \mathbb{R}^n,
\end{align*}
where $a_{ij}$ are adapted processes satisfying
\begin{enumerate}[(i)]
    \item $\mathbb{P}$-a.s. $a_{ij}$ are continuous in $(t,\vec{x}),$ \label{three1}
    \item $a_{ij}(t,\cdot)$ are of class $C^1$ a.s. for every $t \in [0,T],$ \label{three2}
\end{enumerate}
and $M^{ij}_t$ are continuous semimartingales, $i=1,\ldots,M,$ $j=1,\ldots, L$. Finally, let $\{\vec{\phi}_t\}_{t \in [0,\tau)}$ for some stopping time $0 < \tau \leq T$ be the (semimartingale) solution to the SDE \eqref{flow-map} with coefficients satisfying Assumption \ref{assumpstrong} with $k=1.$ Then $\mathbb{P}$-a.s.
\begin{align*} 
    & f(t,\vec{\phi}_t(\vec{x})) =  f(0,\vec{x}) + \sum_{i=1}^M \int^t_0 g_i(s,\vec{\phi}_s(\vec{x})) \, \diff A_s^i +   \sum_{i=1}^M \int^t_0 g_i(s,\vec{\phi}_s(\vec{x})) \circ \diff M^i_s \\
    & + \int^t_0 \big( b \cdot \nabla f \big)(s,\vec{\phi}_s(\vec{x})) \, \diff s 
    + \sum_{j=1}^N \int^t_0 \big( \xi_j\cdot \nabla f \big)(s,\vec{\phi}_s(\vec{x})) \circ \diff B_s^j,
\end{align*}
for all $t \in [0,\tau)$ and $\vec{x} \in \mathbb{R}^n.$
\end{theorem}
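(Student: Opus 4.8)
The plan is to derive the Stratonovich statement from the It\^o version (Theorem \ref{itoflows}) by converting all Stratonovich integrals appearing in the hypothesis and the conclusion into It\^o integrals, applying Theorem \ref{itoflows}, and then converting back. First I would rewrite the defining equation for $f$ in It\^o form: since $g_i(t,\vec x) = g_i(0,\vec x) + \sum_{j}\int_0^t a_{ij}(s,\vec x)\circ\diff M^{ij}_s$, the Stratonovich-to-It\^o correction gives
\begin{align*}
\int_0^t g_i(s,\vec x)\circ\diff M^i_s = \int_0^t g_i(s,\vec x)\,\diff M^i_s + \tfrac12 \sum_{j=1}^L \int_0^t a_{ij}(s,\vec x)\,\diff[M^{ij}_\cdot, M^i_\cdot]_s,
\end{align*}
so that $f$ is an It\^o semimartingale field with bounded-variation part augmented by the correction term, and with martingale part $\sum_i\int_0^t g_i(s,\vec x)\,\diff M^i_s$. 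The hypotheses (a),(1),(i) and the regularity classes $C^3, C^2, C^1$ for $f, g_i, a_{ij}$ are exactly what is needed so that this rewritten field satisfies the hypotheses of Theorem \ref{itoflows} (note the It\^o theorem only needs $f \in C^2$ and $g_i \in C^1$, and here the correction term built from $a_{ij}\in C^1$ is $C^1$, consistent).

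Next I would apply Theorem \ref{itoflows} to this It\^o field. This produces $f(t,\vec\phi_t(\vec x))$ as an It\^o expansion with: the transported bounded-variation terms (including the transported correction $\tfrac12\sum_{i,j}\int_0^t a_{ij}(s,\vec\phi_s(\vec x))\,\diff[M^{ij}_\cdot,M^i_\cdot]_s$), the transported martingale terms $\sum_i\int_0^t g_i(s,\vec\phi_s(\vec x))\,\diff M^i_s$, the drift term $\int_0^t (b\cdot\nabla f)(s,\vec\phi_s)\,\diff s$, the noise terms $\sum_j\int_0^t(\xi_j\cdot\nabla f)(s,\vec\phi_s)\,\diff B^j_s$, the cross-variation terms $\sum_{i,j}\int_0^t(\xi_j\cdot\nabla g_i)(s,\vec\phi_s)\,\diff[M^i_\cdot,B^j_\cdot]_s$, and the It\^o--Stratonovich double-noise term $\tfrac12\sum_j\int_0^t \xi_j\cdot\nabla(\xi_j\cdot\nabla f)(s,\vec\phi_s)\,\diff s$. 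Then I would reassemble the Stratonovich integrals in the target formula. Two reassemblies are needed: first, $\sum_i\int_0^t g_i(s,\vec\phi_s(\vec x))\circ\diff M^i_s$ — its It\^o correction requires the joint quadratic variation of $g_i(s,\vec\phi_s(\vec x))$ with $M^i_s$, which by the chain rule picks up both the explicit martingale part of $g_i$ (giving $\tfrac12\sum_j\int_0^t a_{ij}(s,\vec\phi_s)\,\diff[M^{ij}_\cdot,M^i_\cdot]_s$, cancelling the transported correction) and the contribution from $\vec\phi_s$ through its Brownian part (giving $\tfrac12\sum_j\int_0^t (\xi_j\cdot\nabla g_i)(s,\vec\phi_s)\,\diff[M^i_\cdot, B^j_\cdot]_s$, cancelling the cross-variation term). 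Second, $\sum_j\int_0^t(\xi_j\cdot\nabla f)(s,\vec\phi_s(\vec x))\circ\diff B^j_s$ — its It\^o correction requires the joint variation of $(\xi_j\cdot\nabla f)(s,\vec\phi_s)$ with $B^j$, which again splits: the part coming from differentiating $f$ along $\vec\phi_s$'s Brownian motion yields $\tfrac12\sum_j\int_0^t \xi_j\cdot\nabla(\xi_j\cdot\nabla f)(s,\vec\phi_s)\,\diff s$, cancelling the double-noise drift, while the part coming from the martingale part of $f$ itself (i.e.\ $\sum_i g_i\,\diff M^i$) contributes a term of the form $\tfrac12\sum_{i,j}\int_0^t (\xi_j\cdot\nabla g_i)(s,\vec\phi_s)\,\diff[M^i_\cdot, B^j_\cdot]_s$; I expect this last piece to combine with the remaining half of the cross-variation term from the first reassembly so that everything matches.

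The main obstacle I anticipate is bookkeeping the cross-variation terms correctly: the quantity $(\xi_j\cdot\nabla f)(s,\vec\phi_s(\vec x))$ is itself a semimartingale whose martingale part has \emph{two} sources (the explicit $\diff M^i$ noise inside $f$, and the $\diff B^j$ noise inside $\vec\phi_s$ via the spatial argument), and one must apply the It\^o--Wentzell/chain rule carefully — strictly, one applies Theorem \ref{itoflows} or ordinary It\^o to the composite field $(\xi_j\cdot\nabla f)(s,\vec x)$ evaluated at $\vec\phi_s(\vec x)$ — to extract $\diff[(\xi_j\cdot\nabla f)(\cdot,\vec\phi_\cdot(\vec x)), B^j_\cdot]_s$. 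This requires $f\in C^3$ (so $\xi_j\cdot\nabla f \in C^2$, enough to apply the chain rule a second time), $g_i \in C^2$ (so that $g_i(s,\vec\phi_s)$ is a well-defined semimartingale with computable variations), and $a_{ij}\in C^1$; this is precisely why the Stratonovich hypotheses demand one more derivative than the It\^o ones. I would organize this computation as a sequence of short lemmas: (1) the It\^o form of $f$; (2) the semimartingale decomposition of $g_i(s,\vec\phi_s(\vec x))$ and of $(\xi_j\cdot\nabla f)(s,\vec\phi_s(\vec x))$, each obtained by applying Theorem \ref{itoflows}; (3) the two Stratonovich-to-It\^o conversions in the target identity; and (4) the term-by-term cancellation, which after the dust settles leaves exactly the claimed formula. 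Everything is pathwise/a.s.\ and valid up to the stopping time $\tau$, so no localization subtleties beyond those already handled in Theorem \ref{itoflows} arise.
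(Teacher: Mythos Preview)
Your approach is correct and is precisely the standard derivation: convert the Stratonovich representation of $f$ to It\^o form, apply Theorem \ref{itoflows}, and convert the two stochastic integrals in the conclusion back to Stratonovich, verifying that the three families of correction terms (the $a_{ij}\,\diff[M^{ij},M^i]$ terms, the $(\xi_j\cdot\nabla g_i)\,\diff[M^i,B^j]$ terms with coefficients $-\tfrac12-\tfrac12+1=0$, and the $\xi_j\cdot\nabla(\xi_j\cdot\nabla f)\,\diff s$ terms) all cancel. Your hedged remark about the ``remaining half'' of the cross-variation term is in fact exactly right, and your regularity accounting ($f\in C^3$, $g_i\in C^2$, $a_{ij}\in C^1$) correctly identifies why one extra derivative is needed over the It\^o hypotheses.

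The paper does not provide its own proof of this statement: Theorem \ref{straflows} is presented in the preliminaries as a known result, with the remark that it ``can be derived as a corollary of the classical It\^o--Wentzell theorems'' in Kunita's work. Your conversion argument is exactly the kind of corollary derivation the paper has in mind, so there is nothing to compare.
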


\section{Kunita-It\^o-Wentzell formula for the advection of tensor-valued processes} \label{sec:main-result}
In this section, we prove a generalisation of (1) Kunita's formulas for the evolution of a deterministic tensor field under the action of a stochastic flow of diffeomorphisms (see Theorem \ref{ito-eq}), and (2) the It\^o-Wentzell formula for the evolution of a semimartingale-driven scalar field evaluated on a stochastic flow of diffeomorphisms (Theorem \ref{itoflows}). We call such generalisation the It\^o-Kunita-Wentzell (KIW) formula for the evolution of a tensor-field-valued semimartingale under the action of a stochastic flow of diffeomorphisms. This is the main result of the paper. 

\begin{theorem}[KIW formula for tensor-valued processes] \label{thm:IW-tensor-Ito-ver} Let $K: \Omega \times [0,T] \rightarrow  \Gamma(T^{(r,s)}Q)$ be a tensor-valued semimartingale satisfying 
\begin{enumerate}[(a)] 
\item $\mathbb{P}$-a.s. $K$ is continuous in $(t,x),$
\item $K(t,\cdot)$ is of class $C^2$ a.s. for every $t \in [0,T].$
\end{enumerate}
Moreover, let $S^i_t = A^i_t + M^i_t$ be continuous semimartingales expressed uniquely as the sum of a continuous process of bounded variation and a continuous local martingale, $t \in [0,T],$ $i=1,\ldots,M.$ We assume that $K$ has the explicit form
\begin{align} \label{SPDE-tensor-ito}
    K(t,x) = K(0,x) + \sum_{i=1}^M \int^t_0 G_i(s,x) \, \diff A^i_s + \sum_{i=1}^M  \int^t_0 G_i(s,x) \, \diff M^i_s, \quad t \in [0,T],\quad x \in Q,
\end{align}
where $G_i$ are tensor-valued adapted processes satisfying
\begin{enumerate}
    \item $\mathbb{P}$-a.s. $G_i$ are continuous in $(t,x)$,
    \item $G_i(t,\cdot)$ are of class $C^1$ a.s. for every $t \in [0,T].$
\end{enumerate}
Let $\{\phi_t(x)\}_{t \in [0,\tau(x))}$ for some stopping time $0 < \tau(x) \leq T$ be the flow of \eqref{flow-map} with coefficients satisfying Assumption \ref{assumpstrong} with $k=1.$ Then $\mathbb{P}$-a.s., the tensor-valued process $K$ pulled back by $\phi_t$ satisfies the following relation 
\begin{align} \label{Ito-Wentzell-tensor-Ito-ver}
\begin{split}
    &\phi_t^* K(t,x) =  K(0,x) + \sum_{i=1}^M \int^t_0 \phi_s^* G_i(s,x) \, \diff A^i_s + \sum_{i=1}^M \int^t_0 \phi_s^* G_i(s,x) \,  \diff M^i_s  \\
    & \hspace{50pt}+ \int^t_0 \phi_s^* \mathcal L_b K (s,x) \,  \diff s 
    + \sum_{j=1}^N \int^t_0 \phi_s^* \mathcal L_{\xi_j} K(s,x) \, \diff B_s^j  \\
    &\hspace{20pt}+ \sum_{i=1}^M \sum_{j=1}^N \int^t_0 \phi^*_s \mathcal L_{\xi_j} G_i(s,x) \,  \diff [M^i_{\cdot}, B^j_{\cdot}]_s + \frac12 \sum_{j=1}^N  \int^t_0 \phi^*_s \mathcal L_{\xi_j} \mathcal L_{\xi_j} K(s,x) \,  \diff s,
\end{split}
\end{align}
for all $t \in [0,\tau(x))$ and $x \in Q.$ Moreover, if Assumption \ref{assumpstrong} is satisfied with $k=3,$ we have
\begin{align} \label{Ito-Wentzell-tensor-Ito-ver-push}
\begin{split}
    &(\phi_t)_* K(t,x) =  K(0,x) + \sum_{i=1}^M \int^t_0 (\phi_s)_* G_i(s,x) \, \diff A^i_s + \sum_{i=1}^M \int^t_0 (\phi_s)_* G_i(s,x) \,  \diff M^i_s  \\
    & \hspace{50pt} - \int^t_0 \mathcal L_b [(\phi_s)_* K](s,x) \,  \diff s 
    - \sum_{j=1}^N \int^t_0 \mathcal L_{\xi_j} [(\phi_s)_* K](s,x) \, \diff B_s^j  \\
    &\hspace{20pt} - \sum_{i=1}^M \sum_{j=1}^N \int^t_0 \mathcal L_{\xi_j} [(\phi_s)_* G_i](s,x) \,  \diff [M^i_{\cdot}, B^j_{\cdot}]_s + \frac12 \sum_{j=1}^N  \int^t_0 \mathcal L_{\xi_j} \mathcal L_{\xi_j} [(\phi_s)_* K](s,x) \,  \diff s.
\end{split}
\end{align}
\end{theorem}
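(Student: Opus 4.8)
The plan is to reduce the identity to a computation in a fixed local chart and to build it out of the scalar It\^o--Wentzell formula (Theorem \ref{itoflows}) together with the It\^o product rule. Work in a chart and let $\vec{\phi}_t$ and $\vec{\psi}_t := \vec{\phi}_t^{-1}$ be the coordinate expressions of the flow and its inverse, and $K^{\vec{p}}_{\vec{q}}$, $G_i{}^{\vec{p}}_{\vec{q}}$ the components of $K$, $G_i$, with $\vec{p} = (p_1,\dots,p_r)$, $\vec{q} = (q_1,\dots,q_s)$. By the coordinate formula for the pull-back from Section \ref{sec:tensor-operations},
\[
(\phi_t^* K)^{\vec{i}}_{\vec{j}}(\vec{x}) = K^{\vec{p}}_{\vec{q}}\big(t,\vec{\phi}_t(\vec{x})\big)\prod_{a=1}^r \frac{\partial \psi_t^{i_a}}{\partial x_{p_a}}\big(\vec{\phi}_t(\vec{x})\big)\prod_{b=1}^s \frac{\partial \phi_t^{q_b}}{\partial x_{j_b}}(\vec{x}),
\]
so $\phi_t^* K$ is a finite sum of products of scalar semimartingales of three types: (1) the evaluated field $K^{\vec{p}}_{\vec{q}}(t,\vec{\phi}_t(\vec{x}))$; (2) Jacobian entries $\partial \phi_t^{q}/\partial x_{j}(\vec{x})$; (3) inverse-Jacobian entries along the flow, $\partial \psi_t^{i}/\partial x_{p}(\vec{\phi}_t(\vec{x}))$. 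I would compute the It\^o differential of each factor and apply the product rule.

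For factor (1), Theorem \ref{itoflows} applied componentwise (with field data $G_i{}^{\vec{p}}_{\vec{q}}$) gives its It\^o differential directly, including the $\tfrac12(\xi_j\cdot\nabla(\xi_j\cdot\nabla K^{\vec{p}}_{\vec{q}}))\,\diff s$ correction and the $(\xi_j\cdot\nabla G_i{}^{\vec{p}}_{\vec{q}})\,\diff[M^i,B^j]_s$ cross term. For factors (2) and (3) I would differentiate the SDE \eqref{flow-map} in space: the Jacobian $J_t := D\vec{\phi}_t$ solves the linearised Stratonovich equation $\diff J_t = D\vec{b}(t,\vec{\phi}_t)J_t\,\diff t + \sum_j D\vec{\xi}_j(t,\vec{\phi}_t)J_t\circ\diff B^j_t$, with It\^o form obtained via Assumption \ref{assumpstrong}(2), while $(D\vec{\phi}_t)^{-1} = D\vec{\psi}_t(\vec{\phi}_t)$ satisfies the companion equation coming from $\diff(J_t J_t^{-1})=0$ and the It\^o correction for matrix inverses. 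The $k=1$ part of Assumption \ref{assumpstrong}, giving a $C^{1+\alpha'}$ flow and $\vec{\xi}_j\in C^2$, is exactly what makes these well-defined continuous semimartingales and makes $\mathcal L_{\xi_j}\mathcal L_{\xi_j}K$ meaningful for $K\in C^2$. Feeding these into the product rule and collecting: the drifts of $J_t$ and $(D\vec{\phi}_t)^{-1}$ combine with the $b\cdot\nabla$ and $\xi_j\cdot\nabla$ contributions of factor (1) to reconstitute $\phi_t^*(\mathcal L_b K)$ and $\phi_t^*(\mathcal L_{\xi_j}K)$ through the coordinate formula \eqref{Lieformula}; the bracket of factor (1)'s $\diff B^j$ part against the $\circ\diff B^j$ parts of the Jacobian factors, plus the quadratic variation of those factors, plus the $\tfrac12(\xi_j\cdot\nabla(\xi_j\cdot\nabla\,\cdot))$ correction, collapse to $\tfrac12\sum_j\phi_t^*(\mathcal L_{\xi_j}\mathcal L_{\xi_j}K)$; the bracket of factor (1)'s $G_i$-martingale part against the flow noise gives $\sum_{i,j}\phi_t^*(\mathcal L_{\xi_j}G_i)\,\diff[M^i,B^j]_t$; and the $A^i$- and $M^i$-parts of $K$ ride through the pull-back unchanged. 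Since the identity holds in every chart and all terms transform tensorially, this gives \eqref{Ito-Wentzell-tensor-Ito-ver}.

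For the push-forward I would invert rather than recompute. Set $N(t,x) := (\phi_t)_* K(t,x)$, so $\phi_t^* N(t,x) = K(t,x)$. With Assumption \ref{assumpstrong} at $k=3$ the flow is a $C^{3+\alpha'}$-diffeomorphism, hence $N(t,\cdot)=(\phi_t^{-1})^*K(t,\cdot)$ is of class $C^2$ and its components are continuous semimartingales; write $N$ in the form \eqref{SPDE-tensor-ito} with unknown $C^1$ coefficients $H_i$ and unknown drift/noise/correction. Applying the already-established pull-back formula \eqref{Ito-Wentzell-tensor-Ito-ver} to $N$, equating the result with the prescribed expansion \eqref{SPDE-tensor-ito} of $K=\phi_t^*N$, and matching the finite-variation, $M^i$-martingale, $\diff s$, $\diff B^j$ and $\diff[M^i,B^j]$ components one by one, yields a triangular system whose solution is $H_i = (\phi_s)_* G_i$ together with exactly the drift, noise and correction terms of \eqref{Ito-Wentzell-tensor-Ito-ver-push}. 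The stronger requirement $k=3$ is precisely what guarantees $N$ has the $C^2$/$C^1$ regularity needed to run the pull-back formula on it, which is the source of the higher regularity demanded by \eqref{Ito-Wentzell-tensor-Ito-ver-push} compared to \eqref{Ito-Wentzell-tensor-Ito-ver}.

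The main obstacle is the tensorial bookkeeping: checking that the drift and It\^o-correction terms produced by the product rule --- built from $D\vec{b}$, $D\vec{\xi}_j$, $D^2\vec{\xi}_j$ contracted against the Jacobian factors and the spatial derivatives of $K$ and $G_i$ --- reassemble exactly into $\mathcal L_b K$, $\mathcal L_{\xi_j}K$, $\mathcal L_{\xi_j}\mathcal L_{\xi_j}K$ and $\mathcal L_{\xi_j}G_i$ after pull-back. This is a careful but essentially mechanical index computation, modelled on Kunita's proof of Theorem \ref{ito-eq} (to which our formula reduces when $K$ is deterministic and $G_i\equiv 0$); the genuinely new work is tracking which regularity each step consumes --- confirming that $K\in C^2$, $G_i\in C^1$ and $\vec{\xi}_j\in C^2$ suffice for the pull-back while the push-forward costs two further derivatives on the flow --- and localising in space and up to the stopping time $\tau$ to accommodate possible blow-up on non-compact $Q$.
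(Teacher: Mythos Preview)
Your treatment of the pull-back identity \eqref{Ito-Wentzell-tensor-Ito-ver} is essentially the paper's own proof: work in a chart, write $(\phi_t^*K)^{\vec{i}}_{\vec{j}}$ as a product of $K^{\vec{p}}_{\vec{q}}(t,\vec{\phi}_t)$ with Jacobian and inverse-Jacobian factors, apply the scalar It\^o--Wentzell formula to the first factor, differentiate the flow SDE for the others, and feed everything through the It\^o product rule before recognising the Lie-derivative expressions. The only cosmetic difference is that the paper pairs against an auxiliary smooth tensor $S\in C^\infty(T^{(s,r)}Q)$ to package the index computation into a single scalar $F_t(x)=\langle \phi_t^*K,S\rangle$, whereas you work componentwise; these are equivalent. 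Your remark about localising up to $\tau$ is also what the paper does, though it devotes an appendix (the chart-hopping algorithm of \cite[Lemma 4.8.2]{kunita1997stochastic}) to making the patching precise.

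For the push-forward \eqref{Ito-Wentzell-tensor-Ito-ver-push} you take a genuinely different route from the paper. The paper simply repeats the direct coordinate computation with $\psi_t=\phi_t^{-1}$ in place of $\phi_t$, after deriving the SDE for $\psi_t$ from $\psi_t(\phi_t(y))=y$; this is why $k=3$ enters, since the formula contains $\mathcal L_{\xi_j}^2[(\phi_s)_*K]$ and one needs three spatial derivatives of the flow. Your inversion argument---set $N=(\phi_t)_*K$, note $\phi_t^*N=K$, apply the already-proved pull-back formula to $N$, and match coefficients---is more elegant in spirit, but there is a real gap. To invoke \eqref{Ito-Wentzell-tensor-Ito-ver} for $N$ you must first know that $N$ is a tensor semimartingale of the prescribed form \eqref{SPDE-tensor-ito} (now with an enlarged family of drivers including the $B^j$) with \emph{$C^1$ coefficients}. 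You assume this (``write $N$ in the form \eqref{SPDE-tensor-ito} with unknown $C^1$ coefficients'') but it is precisely what has to be proved; and among the would-be coefficients sits $\mathcal L_{\xi_j}^2[(\phi_s)_*K]$, whose $C^1$-regularity would require $(\phi_s)_*K\in C^3$, i.e.\ more than the $C^2$ you obtain from $k=3$ and $K\in C^2$. In other words, either you must first establish the semimartingale decomposition of $N$ directly---which is exactly the paper's computation---or you need strictly stronger hypotheses than the theorem states to run the inversion. The paper's direct approach avoids this circularity at the cost of repeating the bookkeeping.
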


\begin{proof}[Proof of Theorem \ref{thm:IW-tensor-Ito-ver}]
We set $M=N=1$ without loss of generality. Fix $S \in C^\infty(T^{(s,r)} Q)$ and consider the following real-valued continuous semimartingale
\begin{align} \label{funcion}
    & F_t(x) := \left<\phi_t^*K(t,x), S(x)\right>, \quad t \in [0,T], \quad x \in Q.
\end{align}
Let $(U, \varphi)$ be a local chart and fix $x \in U \subset Q$. We have the following explicit expression for \eqref{funcion} in this chart:
\begin{align}
    F_t(\vec{x}) = K^{p_1,\ldots,p_r}_{q_1,\ldots,q_s}(t,\vec{\phi}_t(\vec{x})) S^{j_1, \ldots,j_s}_{i_1,\ldots,i_r}(\vec{x}) \prod_{\alpha = 1}^r \frac{\partial \psi_t^{i_\alpha}}{\partial y^{p_\alpha}}(\vec{\phi}_t(\vec{x})) \prod_{\beta = 1}^s \frac{\partial \phi_t^{q_\beta}}{\partial x^{j_\beta}}(\vec{x}), \quad t \in [0,\tau_U(x)),\label{eq:F}
\end{align}
where $\tau_U(x) := \inf \{t \in [0, T] : \phi_t(x) \notin U\}$ is the first exit time and we denoted $\psi_t := \phi_t^{-1}$ the inverse flow. As per convention, we have denoted using bold font $\vec{x}$, $\vec{\phi}_t(\vec{x})$ the coordinate expressions for $x, \phi_t(x)$ respectively on the chart $U$.
Using the It\^o product rule for semimartingales, we can derive the following evolution equation for $F_t$:
\begin{align*}
    \diff F_t(\vec{x}) &= S^{j_1, \ldots,j_s}_{i_1,\ldots,i_r}(\vec{x}) \prod_{\alpha = 1}^r \frac{\partial \psi_t^{i_\alpha}}{\partial y^{p_\alpha}}(\vec{\phi}_t(\vec{x})) \prod_{\beta = 1}^s \frac{\partial \phi_t^{q_\beta}}{\partial x^{j_\beta}}(\vec{x}) \,\diff K^{p_1,\ldots,p_r}_{q_1,\ldots,q_s}(t,\vec{\phi}_t(\vec{x})) \\
    &+ K^{p_1,\ldots,p_r}_{q_1,\ldots,q_s}(t,\vec{\phi}_t(\vec{x})) S^{j_1, \ldots,j_s}_{i_1,\ldots,i_r}(\vec{x}) \prod_{\beta = 1}^s \frac{\partial \phi_t^{q_\beta}}{\partial x^{j_\beta}}(\vec{x}) \,\diff \left(\prod_{\alpha = 1}^r \frac{\partial \psi_t^{i_\alpha}}{\partial y^{p_\alpha}}(\vec{\phi}_t(\vec{x}))\right) \\
    &+ K^{p_1,\ldots,p_r}_{q_1,\ldots,q_s}(t,\vec{\phi}_t(\vec{x})) S^{j_1, \ldots,j_s}_{i_1,\ldots,i_r}(\vec{x}) \prod_{\alpha = 1}^r \frac{\partial \psi_t^{i_\alpha}}{\partial y^{p_\alpha}}(\vec{\phi}_t(\vec{x})) \,\diff \left(\prod_{\beta = 1}^s \frac{\partial \phi_t^{q_\beta}}{\partial x^{j_\beta}}(\vec{x})\right) \\
    &+ S^{j_1, \ldots,j_s}_{i_1,\ldots,i_r}(\vec{x}) \prod_{\beta = 1}^s \frac{\partial \phi_t^{q_\beta}}{\partial x^{j_\beta}}(\vec{x}) \,\diff \left[K^{p_1,\ldots,p_r}_{q_1,\ldots,q_s}(\cdot,\vec{\phi}_\cdot(\vec{x})), \prod_{\alpha = 1}^r \frac{\partial \psi_\cdot^{i_\alpha}}{\partial y^{p_\alpha}}(\vec{\phi}_\cdot(\vec{x}))\right]_t \\
    &+ S^{j_1, \ldots,j_s}_{i_1,\ldots,i_r}(\vec{x}) \prod_{\alpha = 1}^r \frac{\partial \psi_t^{i_\alpha}}{\partial y^{p_\alpha}}(\vec{\phi}_t(\vec{x})) \,\diff \left[K^{p_1,\ldots,p_r}_{q_1,\ldots,q_s}(\cdot, \vec{\phi}_\cdot(\vec{x})),\prod_{\beta = 1}^s \frac{\partial \phi_\cdot^{q_\beta}}{\partial x^{j_\beta}}(\vec{x})\right]_t \\
    &+ S^{j_1, \ldots,j_s}_{i_1,\ldots,i_r}(\vec{x}) K^{p_1,\ldots,p_r}_{q_1,\ldots,q_s}(t,\vec{\phi}_t(\vec{x})) \,\diff \left[\prod_{\alpha = 1}^r \frac{\partial \psi_\cdot^{i_\alpha}}{\partial y^{p_\alpha}}(\vec{\phi}_\cdot(\vec{x})), \prod_{\beta = 1}^s \frac{\partial \phi_\cdot^{q_\beta}}{\partial x^{j_\beta}}(\vec{x})\right]_t,
\end{align*}
which should be understood as an integral expression valid for $t \in [0, \tau_U(x))$. Furthermore, noting that the local components $K^{p_1,\ldots,p_r}_{q_1,\ldots,q_s}$ on $U$ of the tensor field $K$ are scalar-valued, by applying the classic It\^o-Wentzell formula (Theorem \ref{itoflows}), for all $p_1,\ldots,p_r, q_1, \ldots, q_s \in \{1,\ldots, n\}$, we obtain 
\begin{align*}
    &\diff K^{p_1,\ldots,p_r}_{q_1,\ldots,q_s}(t,\vec{\phi}_t(\vec{x})) = G^{p_1,\ldots,p_r}_{q_1,\ldots,q_s}(t,\vec{\phi}_t(\vec{x})) \,\diff A_t + G^{p_1,\ldots,p_r}_{q_1,\ldots,q_s}(t,\vec{\phi}_t(\vec{x})) \,\diff M_t \\
    &\quad + b^k(t,\vec{\phi}_t(\vec{x})) \frac{\partial K^{p_1,\ldots,p_r}_{q_1,\ldots,q_s}}{\partial y^k} (t,\vec{\phi}_t(\vec{x})) \,\diff t 
    + \xi^k(t,\vec{\phi}_t(\vec{x})) \frac{\partial K^{p_1,\ldots,p_r}_{q_1,\ldots,q_s}}{\partial y^k} (t,\vec{\phi}_t(\vec{x})) \,\diff B_t \\
    &\quad + \xi^k(t,\vec{\phi}_t(\vec{x})) \frac{\partial G^{p_1,\ldots,p_r}_{q_1,\ldots,q_s}}{\partial y^k} (t,\vec{\phi}_t(\vec{x})) \,\diff [M_{\cdot}, B_{\cdot}]_t + \frac12  \xi^l(t,\vec{\phi}_t(\vec{x})) \frac{\partial}{\partial y^l}\left(\xi^k(t,\vec{\phi}_t(\vec{x})) \frac{\partial K^{p_1,\ldots,p_r}_{q_1,\ldots,q_s}}{\partial y^k}(t,\vec{\phi}_t(\vec{x}))\right) \, \diff t.
\end{align*}
Next, differentiating the SDE \eqref{flow-map} with respect to the spatial variable, we get
\begin{align}
    \diff D\vec{\phi}_t(\vec{x}) &= D\vec{b}(t,\vec{\phi}_t(\vec{x})) D\vec{\phi}_t(\vec{x}) \,\diff t + D\vec{\xi} (t,\vec{\phi}_t(\vec{x})) D\vec{\phi}_t(\vec{x}) \circ \diff B_t \nonumber \\
    &= [D\vec{b}(t,\vec{\phi}_t(\vec{x})) + \vec{C}_+(t,\vec{\phi}_t(\vec{x}))] D\vec{\phi}_t(\vec{x}) \,\diff t + D\vec{\xi}(t,\vec{\phi}_t(\vec{x})) D\vec{\phi}_t(\vec{x}) \,\diff B_t, \label{eq:D-phi-ito}
\end{align}
for $t \in [0,\tau_U(x))$ and $x \in U,$ where we denoted by
\begin{align*}
    (C_+)^i_j(t,\vec{x}) := \frac12 \frac{\partial}{\partial x^j}\left(\xi^k(t,\vec{x}) \frac{\partial \xi^i}{\partial x^k}(t,\vec{\phi}_t(\vec{x}))\right),
\end{align*}
the Stratonovich-to-It\^o correction term. Also observing that $D\vec{\psi}_t(\vec{\phi}_t(\vec{x})) = (D\vec{\phi}_t(\vec{x}))^{-1}$, we obtain
\begin{align}
    \diff D\vec{\psi}_t(\vec{\phi}_t(\vec{x})) &= \diff (D\vec{\phi}_t(\vec{x}))^{-1} \nonumber \\
    &= - (D\vec{\phi}_t(\vec{x}))^{-1}(\circ \diff D\vec{\phi}_t(\vec{x}))\,(D\vec{\phi}_t(\vec{x}))^{-1} \nonumber \\
    &= - (D\vec{\phi}_t(\vec{x}))^{-1}D\vec{b}(t,\vec{\phi}_t(\vec{x})) \,\diff t - (D\vec{\phi}_t(\vec{x}))^{-1}D\vec{\xi}(t,\vec{\phi}_t(\vec{x})) \circ \diff B_t \nonumber \\
    &= - D\vec{\psi}_t(\vec{\phi}_t(\vec{x}))[D\vec{b}(t,\vec{\phi}_t(\vec{x})) + \vec{C}_-(t,\vec{\phi}_t(\vec{x}))] \,\diff t - D\vec{\psi}_t(\vec{\phi}_t(\vec{x}))D\vec{\xi}(t,\vec{\phi}_t(\vec{x})) \,\diff B_t, \label{eq:D-psi-ito}
\end{align}
where
\begin{align*}
    (C_-)^i_j(t,\vec{x}) := \frac12 \left(\frac{\partial \xi^i}{\partial x^k}(t,\vec{x}) \frac{\partial \xi^k}{\partial x^j}(t,\vec{x}) - \xi^k(t,\vec{x}) \frac{\partial^2 \xi^i}{\partial x^j \partial x^k}(t,\vec{x})\right),
\end{align*}
is the corresponding Stratonovich-to-It\^o correction.
Applying the It\^o product rule for semimartingales, we obtain the following formula
\begin{align}
    &\diff \left(\prod_{\beta = 1}^s \frac{\partial \phi_t^{q_\beta}}{\partial x^{j_\beta}}(\vec{x})\right) = \sum_{a = 1}^s \left(\prod_{\beta \neq a}^s \frac{\partial \phi_t^{q_\beta}}{\partial x^{j_\beta}}(\vec{x})\right) \diff \frac{\partial \phi_t^{q_a}}{\partial x^{j_a}}(\vec{x}) + \frac12 \sum_{\substack{b,c = 1 \\ b \neq c}}^s \left(\prod_{\beta \neq b,c}^s \frac{\partial \phi_t^{q_\beta}}{\partial x^{j_\beta}}(\vec{x})\right) \diff \left[\frac{\partial \phi_\cdot^{q_b}}{\partial x^{j_b}}(\vec{x}),\frac{\partial \phi_\cdot^{q_c}}{\partial x^{j_c}}(\vec{x})\right]_t \nonumber \\
    \begin{split}
    &= \Biggl[\sum_{a = 1}^s \left(\prod_{\beta \neq a}^s \frac{\partial \phi_t^{q_\beta}}{\partial x^{j_\beta}}(\vec{x})\right) \frac{\partial}{\partial y^k}\left(b^{q_a}(t,\vec{\phi}_t(\vec{x})) + \frac12 \xi^l(t,\vec{\phi}_t(\vec{x})) \frac{\partial \xi^{q_a}}{\partial y^l}(t,\vec{\phi}_t(\vec{x}))\right)\frac{\partial \phi_t^k}{\partial x^{j_a}}(\vec{x})\\
    &\qquad + \frac12 \sum_{\substack{b,c = 1 \\ b \neq c}}^s \left(\prod_{\beta \neq b,c}^s \frac{\partial \phi_t^{q_\beta}}{\partial x^{j_\beta}}(\vec{x})\right) \frac{\partial \xi^{q_b}}{\partial y^k}(t,\vec{\phi}_t(\vec{x})) \frac{\partial \phi_t^k}{\partial x^{j_b}}(\vec{x}) \frac{\partial \xi^{q_c}}{\partial y^l}(t,\vec{\phi}_t(\vec{x})) \frac{\partial \phi_t^l}{\partial x^{j_c}}(\vec{x})\Biggr] \, \diff t \\
    &\quad + \sum_{a = 1}^s \left(\prod_{\beta \neq a}^s \frac{\partial \phi_t^{q_\beta}}{\partial x^{j_\beta}}(\vec{x})\right) \frac{\partial \xi^{q_a}}{\partial y^k}(t,\vec{\phi}_t(\vec{x}))\frac{\partial \phi_t^k}{\partial x^{j_a}}(\vec{x}) \,\diff B_t.
    \end{split} \label{eq:D-phi-prod}
\end{align}
Similarly, taking into account \eqref{eq:D-psi-ito}, we arrive at
\begin{align}
    &\diff\left(\prod_{\alpha = 1}^r \frac{\partial \psi_t^{i_\alpha}}{\partial y^{p_\alpha}}(\vec{\phi}_t(\vec{x}))\right) = -\Biggl[\sum_{a = 1}^r \left(\prod_{\alpha \neq a}^r \frac{\partial \psi_t^{i_\alpha}}{\partial y^{p_\alpha}}(\vec{\phi}_t(\vec{x}))\right) \frac{\partial \psi_t^{i_a}}{\partial y^k}(\vec{\phi}_t(\vec{x})) \nonumber \\
    &\qquad \times \left(\frac{\partial b^k}{\partial y^{p_a}}(t,\vec{\phi}_t(\vec{x})) + \frac12 
    \left(\frac{\partial \xi^k}{\partial y^l}(t,\vec{\phi}_t(\vec{x})) \frac{\partial \xi^l}{\partial x^{p_a}}(t,\vec{\phi}_t(\vec{x})) - \xi^l(t,\vec{\phi}_t(\vec{x})) \frac{\partial^2 \xi^k}{\partial x^l \partial x^{p_a}}(t,\vec{\phi}_t(\vec{x}))\right)
    \right) \nonumber \\
    &\qquad - \frac12 \sum_{\substack{b,c = 1 \\ b \neq c}}^r \left(\prod_{\alpha \neq b,c}^r \frac{\partial \psi_t^{i_\alpha}}{\partial y^{p_\alpha}}(\vec{\phi}_t(\vec{x}))\right) \frac{\partial \psi_t^{i_b}}{\partial y^k}(\vec{\phi}_t(\vec{x})) \frac{\partial \xi^k}{\partial y^{p_b}}(t,\vec{\phi}_t(\vec{x})) \frac{\partial \psi_t^{i_c}}{\partial y^l}(\vec{\phi}_t(\vec{x})) \frac{\partial \xi^l}{\partial y^{p_c}}(t,\vec{\phi}_t(\vec{x}))\Biggr] \, \diff t \nonumber \\
    &\quad - \sum_{a = 1}^r \left(\prod_{\alpha \neq a}^r \frac{\partial \psi_t^{i_\alpha}}{\partial y^{p_\alpha}}(\vec{\phi}_t(\vec{x}))\right) \frac{\partial \psi_t^{i_a}}{\partial y^k}(\vec{\phi}_t(\vec{x})) \frac{\partial \xi^k}{\partial y^{p_a}}(t,\vec{\phi}_t(\vec{x})) \,\diff B_t \label{D-psi-split},
\end{align}
for $t \in [0,\tau_U(x))$. Combining all the expressions obtained above, one can check that for the value $x \in U$ fixed at the beginning of the proof, $F_t(\vec{x})$ can be expressed as
\begin{align} \label{F-eq-tensor}
\begin{split}
F_t(\vec{x}) - F_0(\vec{x}) & = \int^t_0 \widehat{G}^{1}_s(\vec{x}) \,\diff s +  \int^t_0 \widehat{G}^{2}_s(\vec{x}) \,\diff [M_{\cdot},B_{\cdot}]_s \\
& + \int^t_0 \widehat{G}^{3}_s(\vec{x}) \,\diff A_s + \int^t_0 \widehat{H}^{1}_s(\vec{x}) \,\diff W_s + \int^t_0 \widehat{H}^{2}_s(\vec{x}) \,\diff B_s,
\end{split}
\end{align}
for $t \in [0,\tau_U(x)),$ where
\begin{align*}
    \widehat{G}^{1}_t(\vec{x}):= & S^{j_1, \ldots,j_s}_{i_1,\ldots,i_r}(\vec{x}) \left[\prod_{\alpha = 1}^r \frac{\partial \psi_t^{i_\alpha}}{\partial y^{p_\alpha}}(\vec{\phi}_t(\vec{x})) \prod_{\beta = 1}^s \frac{\partial \phi_t^{q_\beta}}{\partial x^{j_\beta}}(\vec{x}) \,b^k_t(\vec{\phi}_t(\vec{x})) \frac{\partial K^{p_1,\ldots,p_r}_{q_1,\ldots,q_s}}{\partial y^k}(t,\vec{\phi}_t(\vec{x})) \right.\\
    & + K^{p_1,\ldots,p_r}_{q_1,\ldots,q_s}(t,\vec{\phi}_t(\vec{x})) \left(\prod_{\alpha=1}^r\frac{\partial \psi_t^{i_\alpha}}{\partial y^{p_\alpha}}(\vec{\phi}_t(\vec{x})) \sum_{b=1}^s \left(\prod_{\beta \neq b}^s \frac{\partial \phi_t^{q_\beta}}{\partial x^{j_\beta}}(\vec{x})\right) \frac{\partial b^{q_b}_t}{\partial y^k}(\vec{\phi}_t(\vec{x})) \frac{\partial \phi_t^k}{\partial x^{j_b}}(\vec{x}) \right. \\
    &\left.\left.- \prod_{\beta = 1}^s \frac{\partial \phi_t^{q_\beta}}{\partial x^{j_\beta}}(\vec{x}) \sum_{c=1}^r \left(\prod_{\alpha \neq c}^r\frac{\partial \psi_t^{i_\alpha}}{\partial y^{p_\alpha}}(\vec{\phi}_t(\vec{x}))\right)\frac{\partial \psi_t^{i_c}}{\partial y^k}(\vec{\phi}_t(\vec{x})) \frac{\partial b^k_t}{\partial y^{p_c}}(\vec{\phi}_t(\vec{x}))\right) \right]\\
    &+\frac12 S^{j_1,\ldots,j_s}_{i_1,\ldots,i_r}(\vec{x})\left[ \prod_{\alpha=1}^r \frac{\partial \psi_t^{i_\alpha}}{\partial y^{p_\alpha}}(\vec{\phi}_t(\vec{x})) \prod_{\beta=1}^s \frac{\partial \phi_t^{q_\beta}}{\partial x^{j_\beta}}(\vec{x}) \xi^l_t(\vec{\phi}_t(\vec{x}))\frac{\partial}{\partial y^l}\left(\xi^k_t(\vec{\phi}_t(\vec{x}))\frac{\partial K^{p_1,\ldots,p_r}_{q_1,\ldots,q_s}(t,\vec{\phi}_t(\vec{x}))}{\partial y^k}\right)\right.\\
    &+ K^{p_1,\ldots,p_r}_{q_1,\ldots,q_s}(t,\vec{\phi}_t(\vec{x}))\left(\prod_{\beta=1}^s \frac{\partial \phi_t^{q_\beta}}{\partial x^{j_\beta}}(\vec{x})\sum_{b=1}^r\left(\prod_{\alpha \neq b}^r \frac{\partial \psi_t^{i_\alpha}}{\partial y^{p_\alpha}}(\vec{\phi}_t(\vec{x}))\right) \frac{\partial \psi_t^{i_b}}{\partial y^k}(\vec{\phi}_t(\vec{x})) \right. \\
    &\quad \times \left(\frac{\partial \xi^k_t}{\partial y^l}(\vec{\phi}_t(\vec{x})) \frac{\partial \xi^l_t}{\partial y^{p_b}}(\vec{\phi}_t(\vec{x})) - \xi_t^l(\vec{\phi}_t(\vec{x})) \frac{\partial^2 \xi^k_t}{\partial y^l \partial x^{p_b}}(\vec{\phi}_t(\vec{x}))\right) \\
    &+ \prod_{\alpha=1}^r \frac{\partial \psi_t^{i_\alpha}}{\partial y^{p_\alpha}}(\vec{\phi}_t(\vec{x}))\sum_{c=1}^s\left(\prod_{\beta \neq c}^s \frac{\partial \phi_t^{q_\beta}}{\partial x^{j_\beta}}(\vec{x})\right) \frac{\partial}{\partial y^k}\left(\xi^l_t(\vec{\phi}_t(\vec{x})) \frac{\partial \xi^{q_c}_t}{\partial y^l}(\vec{\phi}_t(\vec{x}))\right) \frac{\partial \phi_t^k}{\partial x^{j_c}}(\vec{\phi}_t(\vec{x})) \\
    &+ \prod_{\beta=1}^s \frac{\partial \phi_t^{q_\beta}}{\partial x^{j_\beta}}(\vec{x}) \sum_{b,c=1}^r\left(\prod_{\alpha \neq b,c}^r \frac{\partial \psi_t^{i_\alpha}}{\partial y^{p_\alpha}}(\vec{\phi}_t(\vec{x}))\right)\frac{\partial \psi_t^{i_b}}{\partial y^k}(\vec{\phi}_t(\vec{x}))\frac{\partial \xi^k_t}{\partial y^{p_b}}(\vec{\phi}_t(\vec{x}))\frac{\partial \psi_t^{i_c}}{\partial y^l}(\vec{\phi}_t(\vec{x}))\frac{\partial \xi^l_t}{\partial y^{p_c}}(\vec{\phi}_t(\vec{x})) \\
    &\left. + \prod_{\alpha=1}^r \frac{\partial \psi_t^{q_\beta}}{\partial y^{j_\beta}}(\vec{\phi}_t(\vec{x}))\sum_{b,c=1}^s\left(\prod_{\beta \neq b,c}^s \frac{\partial \phi_t^{q_\beta}}{\partial y^{j_\beta}}(\vec{x})\right)\frac{\partial \xi_t^{q_b}}{\partial y^k}(\vec{\phi}_t(\vec{x}))\frac{\partial \phi_t^k}{\partial x^{j_b}}(\vec{x}) \frac{\partial \xi_t^{q_c}}{\partial y^l}(\vec{\phi}_t(\vec{x}))\frac{\partial \phi_t^l}{\partial x^{j_c}}\right) \\
    &- 2\xi_t^k(\vec{\phi}_t(\vec{x}))\frac{\partial K^{p_1,\ldots,p_r}_{q_1,\ldots,q_s}}{\partial y^k}(t,\vec{\phi}_t(\vec{x}))\left(\prod_{\beta=1}^s \frac{\partial \phi_t^{q_\beta}}{\partial x^{j_\beta}}(\vec{x}) \sum_{b=1}^r\left(\prod_{\alpha \neq b}^r \frac{\partial \psi_t^{i_\alpha}}{\partial y^{p_\alpha}}(\vec{\phi}_t(\vec{x}))\right) \frac{\partial \psi_t^{i_b}}{\partial y^k}(\vec{\phi}_t(\vec{x}))\frac{\partial \xi_t^k}{\partial y^{p_b}}(\vec{\phi}_t(\vec{x}))\right. \\
    &\left.- \prod_{\alpha=1}^r \frac{\partial \psi_t^{i_\alpha}}{\partial y^{p_\alpha}}(\vec{\phi}_t(\vec{x}))\sum_{c=1}^s\left(\prod_{\beta \neq c}^s \frac{\partial \phi_t^{q_\beta}}{\partial x^{i_\beta}}(\vec{x})\right)\frac{\partial \xi_t^{q_c}}{\partial y^k}(\vec{\phi}_t(\vec{x})) \frac{\partial \phi_t^k}{\partial x^{j_c}}(\vec{x})\right) \\
    &-2K^{p_1,\ldots,p_r}_{q_1,\ldots,q_s}(t, \vec{\phi}_t(\vec{x}))\left(\sum_{b=1}^s \left(\prod_{\beta \neq b}^s \frac{\partial \phi_t^{q_\beta}}{\partial x^{j_\beta}}(\vec{x})\right)\frac{\partial \xi_t^{q_b}}{\partial y^k}(\vec{\phi}_t(\vec{x})) \frac{\partial \phi_t^k}{\partial x^{j_b}}(\vec{x})\right) \\
    &\qquad \qquad \left.\times \left(\sum_{c=1}^r\left(\prod_{\alpha \neq c}^r \frac{\partial \psi_t^{i_\alpha}}{\partial y^{p_\alpha}}(\vec{\phi}_t(\vec{x}))\right) \frac{\partial \psi_t^{i_c}}{\partial y^l}(\vec{\phi}_t(\vec{x})) \frac{\partial \xi_t^l}{\partial y^{p_c}}(\vec{\phi}_t(\vec{x}))\right)\right],
\end{align*}
\begin{align*}
    \widehat{G}^{2}_t(\vec{x}) &:= S^{j_1, \ldots,j_s}_{i_1,\ldots,i_r}(\vec{x}) \xi^k_t(\vec{\phi}_t(\vec{x})) \frac{\partial G^{p_1,\ldots,p_r}_{q_1,\ldots,q_s}}{\partial y^k}(t,\vec{\phi}_t(\vec{x})) \prod_{\alpha = 1}^r \frac{\partial \psi_t^{i_\alpha}}{\partial y^{p_\alpha}}(\vec{\phi}_t(\vec{x})) \prod_{\beta = 1}^s \frac{\partial \phi_t^{q_\beta}}{\partial x^{j_\beta}}(\vec{x}) \\
    &+ G^{p_1,\ldots,p_r}_{q_1,\ldots,q_s}(t,\vec{\phi}_t(\vec{x})) S^{j_1,\ldots,j_s}_{i_1,\ldots,i_r}(\vec{x}) \left(\prod_{\alpha=1}^r\frac{\partial \psi_t^{i_\alpha}}{\partial y^{p_\alpha}}(\vec{\phi}_t(\vec{x})) \sum_{l=1}^s \left(\prod_{\beta \neq l}^s \frac{\partial \phi_t^{q_\beta}}{\partial x^{j_\beta}}(\vec{x})\right) \frac{\partial \xi^{q_l}_t}{\partial y^k}(\vec{\phi}_t(\vec{x})) \frac{\partial \phi_t^k}{\partial x^{j_l}}(\vec{x}) \right. \\
    &\left. - \prod_{\beta = 1}^s \frac{\partial \phi_t^{q_\beta}}{\partial x^{j_\beta}}(\vec{x}) \sum_{m=1}^r \left(\prod_{\alpha \neq m}^r\frac{\partial \psi_t^{i_\alpha}}{\partial y^{p_\alpha}}(\vec{\phi}_t(\vec{x}))\right)\frac{\partial \psi_t^{i_m}}{\partial y^k}(\vec{\phi}_t(\vec{x})) \frac{\partial \xi^k_t}{\partial y^{p_m}}(\vec{\phi}_t(\vec{x}))\right),
\end{align*}
\begin{align*}
    \widehat{G}^{3}_t(\vec{x}) &:= G^{p_1,\ldots,p_r}_{q_1,\ldots,q_s}(t,\vec{\phi}_t(\vec{x})) S^{j_1,\ldots,j_s}_{i_1,\ldots,i_r}(\vec{x}) \prod_{\alpha=1}^r \frac{\partial \psi_t^{i_\alpha}}{\partial y^{p_\alpha}}(\vec{\phi}_t(\vec{x})) \prod_{\beta=1}^s \frac{\partial \phi_t^{q_\beta}}{\partial x^{j_\beta}}(\vec{x}), \\
    \widehat{H}^{1}_t(\vec{x}) &:= G^{p_1,\ldots,p_r}_{q_1,\ldots,q_s}(t,\vec{\phi}_t(\vec{x})) S^{j_1,\ldots,j_s}_{i_1,\ldots,i_r}(\vec{x}) \prod_{\alpha=1}^r \frac{\partial \psi_t^{i_\alpha}}{\partial y^{p_\alpha}}(\vec{\phi}_t(\vec{x})) \prod_{\beta=1}^s \frac{\partial \phi_t^{q_\beta}}{\partial x^{j_\beta}}(\vec{x}), \\
    \widehat{H}^{2}_t(\vec{x}) &:= S^{j_1, \ldots,j_s}_{i_1,\ldots,i_r}(\vec{x}) \xi^k_t(\vec{\phi}_t(\vec{x})) \frac{\partial K^{p_1,\ldots,p_r}_{q_1,\ldots,q_s}}{\partial y^k}(t,\vec{\phi}_t(\vec{x})) \prod_{\alpha = 1}^r \frac{\partial \psi_t^{i_\alpha}}{\partial y^{p_\alpha}}(\vec{\phi}_t(\vec{x})) \prod_{\beta = 1}^s \frac{\partial \phi_t^{q_\beta}}{\partial x^{j_\beta}}(\vec{x}) \\
    &+ K^{p_1,\ldots,p_r}_{q_1,\ldots,q_s}(t,\vec{\phi}_t(\vec{x})) S^{j_1,\ldots,j_s}_{i_1,\ldots,i_r}(\vec{x}) \left(\prod_{\alpha=1}^r\frac{\partial \psi_t^{i_\alpha}}{\partial y^{p_\alpha}}(\vec{\phi}_t(\vec{x})) \sum_{l=1}^s \left(\prod_{\beta \neq l}^s \frac{\partial \phi_t^{q_\beta}}{\partial x^{j_\beta}}(\vec{x})\right) \frac{\partial \xi^{q_l}_t}{\partial y^k}(\vec{\phi}_t(\vec{x})) \frac{\partial \phi_t^k}{\partial x^{j_l}}(\vec{x}) \right. \\
    &\left. - \prod_{\beta = 1}^s \frac{\partial \phi_t^{q_\beta}}{\partial x^{j_\beta}}(\vec{x}) \sum_{m=1}^r \left(\prod_{\alpha \neq m}^r\frac{\partial \psi_t^{i_\alpha}}{\partial y^{p_\alpha}}(\vec{\phi}_t(\vec{x}))\right)\frac{\partial \psi_t^{i_m}}{\partial y^k}(\vec{\phi}_t(\vec{x})) \frac{\partial \xi^k_t}{\partial y^{p_m}}(\vec{\phi}_t(\vec{x}))\right)
\end{align*}
in $[0,\tau_U(x)).$ Note that we have used the shorthand notations $b_t(x) := b(t,x)$, $\xi_t(x) := \xi(t,x)$ for simplicity.
Finally, from the explicit formula of the Lie derivative \eqref{Lieformula}, we have the local expression
\begin{align*}
    &\left.\left<\phi_t^* \L_b K(t,x),S(x)\right>\right|_{U} = S^{j_1, \ldots,j_s}_{i_1,\ldots,i_r}(\vec{x}) \left[b^k_t(\vec{\phi}_t(\vec{x})) \frac{\partial K^{p_1,\ldots,p_r}_{q_1,\ldots,q_s}}{\partial y^k}(t,\vec{\phi}_t(\vec{x})) \prod_{\alpha = 1}^r \frac{\partial \psi_t^{i_\alpha}}{\partial y^{p_\alpha}}(\vec{\phi}_t(\vec{x})) \prod_{\beta = 1}^s \frac{\partial \phi_t^{q_\beta}}{\partial x^{j_\beta}}(\vec{x}) \right.\\
    &\quad + K^{p_1,\ldots,p_r}_{q_1,\ldots,q_s}(t,\vec{\phi}_t(\vec{x})) \left(\prod_{\alpha=1}^r\frac{\partial \psi_t^{i_\alpha}}{\partial y^{p_\alpha}}(\vec{\phi}_t(\vec{x})) \sum_{l=1}^s \left(\prod_{\beta \neq l}^s \frac{\partial \phi_t^{q_\beta}}{\partial x^{j_\beta}}(\vec{x})\right) \frac{\partial b^{q_l}_t}{\partial y^k}(\vec{\phi}_t(\vec{x})) \frac{\partial \phi_t^k}{\partial x^{j_l}}(\vec{x}) \right. \\
    &\left.\left. \quad - \prod_{\beta = 1}^s \frac{\partial \phi_t^{q_\beta}}{\partial x^{j_\beta}}(\vec{x}) \sum_{m=1}^r \left(\prod_{\alpha \neq m}^r\frac{\partial \psi_t^{i_\alpha}}{\partial y^{p_\alpha}}(\vec{\phi}_t(\vec{x}))\right)\frac{\partial \psi_t^{i_m}}{\partial y^k}(\vec{\phi}_t(\vec{x})) \frac{\partial b^k_t}{\partial y^{p_m}}(\vec{\phi}_t(\vec{x}))\right)\right]
\end{align*}
for $x \in U$, $t \in [0,\tau_U(x)),$ and by direct calculation, we also obtain the following expression
\begin{align*}
    &\left.\left<\phi_t^* \L_\xi \L_\xi K(t,x),S(x)\right>\right|_U \\
    &= S^{j_1,\ldots,j_s}_{i_1,\ldots,i_r}(\vec{x})\left[\xi^k_t(\vec{\phi}_t(\vec{x})) \frac{\partial}{\partial y^l}\left(\xi^k_t(\vec{\phi}_t(\vec{x}))\frac{\partial K^{p_1,\ldots,p_r}_{q_1,\ldots,q_s}}{\partial y^k}(t,\vec{\phi}_t(\vec{x}))\right) \prod_{\alpha = 1}^r \frac{\partial \psi_t^{i_\alpha}}{\partial y^{p_\alpha}}(\vec{\phi}_t(\vec{x})) \prod_{\beta=1}^s \frac{\partial \phi_t^{q_\beta}}{\partial x^{j_\beta}}(\vec{x})\right. \\
    &-K^{p_1,\ldots,p_r}_{q_1,\ldots,q_s}(t,\vec{\phi}_t(\vec{x}))\left(\prod_{\beta=1}^s \frac{\partial \phi_t^{q_\beta}}{\partial x^{j_\beta}}(\vec{x}) \sum_{b=1}^r\xi^l_t(\vec{\phi}_t(\vec{x}))\frac{\partial^2 \xi^k_t}{\partial y^l \partial y^{p_b}}(\vec{\phi}_t(\vec{x}))\frac{\partial \psi_t^{i_b}}{\partial y^k}(\vec{\phi}_t(\vec{x}))\prod_{\alpha \neq b} \frac{\partial \psi_t^{i_\alpha}}{\partial y^{p_\alpha}}(\vec{\phi}_t(\vec{x}))\right. \\
    &\left.- \prod_{\alpha=1}^r \frac{\partial \psi^{i_\alpha}_t}{\partial y^{p_\alpha}}(\vec{\phi}_t(\vec{x})) \sum_{c=1}^s \xi^l_t(\vec{\phi}_t(\vec{x}))\frac{\partial^2 \xi_t^{q_c}}{\partial y^k \partial y^l}(\vec{\phi}_t(\vec{x})) \frac{\partial \phi_t^k}{\partial x^{j_c}}(\vec{x}) \prod_{\beta \neq c}^s \frac{\partial \phi_t^{q_\beta}}{\partial x^{j_\beta}}(\vec{x}) \right) \\
    &-2\xi^l_t(\vec{\phi}_t(\vec{x}))\frac{\partial K^{p_1,\ldots,p_r}_{q_1,\ldots,q_s}}{\partial y^l}(t,\vec{\phi}_t(\vec{x}))\left(\prod_{\beta=1}^s \frac{\partial \phi_t^{q_\beta}}{\partial x^{j_\beta}}(\vec{x}) \sum_{b=1}^r \frac{\partial \xi_t^k}{\partial y^{p_b}}(\vec{\phi}_t(\vec{x}))
    \frac{\partial \psi_t^{i_b}}{\partial y^k}(\vec{\phi}_t(\vec{x}))\prod_{\alpha \neq b} \frac{\partial \psi_t^{i_\alpha}}{\partial y^{p_\alpha}}(\vec{\phi}_t(\vec{x}))\right. \\
    &-\left.\prod_{\alpha=1}^r \frac{\partial \psi_t^{i_\alpha}}{\partial y^{p_\alpha}}(\vec{\phi}_t(\vec{x})) \sum_{c=1}^s \frac{\partial \xi_t^{q_c}}{\partial y^k}(\vec{\phi}_t(\vec{x})) \frac{\partial \phi_t^k}{\partial x^{j_c}}(\vec{x}) \prod_{\beta \neq c}^s \frac{\partial \phi_t^{q_\beta}}{\partial x^{j_\beta}}(\vec{x}) \right) \\
    &+ K^{p_1,\ldots,p_r}_{q_1,\ldots,q_s}(t,\vec{\phi}_t(\vec{x}))\left(\sum_{b=1}^r \frac{\partial \xi_t^k}{\partial y^l}(\vec{\phi}_t(\vec{x})) \frac{\partial \xi_t^l}{\partial y^{p_b}}(\vec{\phi}_t(\vec{x}))\frac{\partial \psi_t^{i_b}}{\partial y^k}(\vec{\phi}_t(\vec{x})) \prod_{\alpha \neq b}^r \frac{\partial \psi_t^{i_\alpha}}{\partial y^{p_\alpha}}(\vec{\phi}_t(\vec{x})) \prod_{\beta=1}^s \frac{\partial \phi_t^{q_\beta}}{\partial x^{j_\beta}}(\vec{x})\right. \\
    &+ \sum_{c=1}^s \frac{\partial \xi^l_t}{\partial y^k}(\vec{\phi}_t(\vec{x})) \frac{\partial \xi_t^{q_c}}{\partial y^l}(\vec{\phi}_t(\vec{x})) \frac{\partial \phi_t^k}{\partial x^{j_c}}(\vec{x}) \prod_{\alpha=1}^r \frac{\partial \psi_t^{i_\alpha}}{\partial y^{p_\alpha}}(\vec{\phi}_t(\vec{x})) \prod_{\beta \neq c}^s \frac{\partial \phi_t^{q_\beta}}{\partial x^{j_\beta}}(\vec{x}) \\
    &+ \sum_{b,c = 1}^r \frac{\partial \xi_t^k}{\partial y^{p_b}}(\vec{\phi}_t(\vec{x})) \frac{\partial \xi_t^l}{\partial y^{p_c}}(\vec{\phi}_t(\vec{x}))
    \frac{\partial \psi_t^{p_b}}{\partial y^k}(\vec{\phi}_t(\vec{x})) \frac{\partial \psi_t^{p_c}}{\partial y^l}(\vec{\phi}_t(\vec{x})) \prod_{\alpha \neq b,c}^r \frac{\partial \psi_t^{i_\alpha}}{\partial y^{p_\alpha}}(\vec{\phi}_t(\vec{x})) \prod_{\beta=1}^s \frac{\partial \phi_t^{q_\beta}}{\partial x^{j_\beta}}(\vec{x}) \\
    &+ \sum_{b,c = 1}^s \frac{\partial \xi_t^{q_b}}{\partial y^k}(\vec{\phi}_t(\vec{x})) \frac{\partial \xi_t^{q_c}}{\partial y^l}(\vec{\phi}_t(\vec{x})) 
    \frac{\partial \phi_t^k}{\partial y^{q_b}}(\vec{x}) \frac{\partial \phi_t^l}{\partial y^{q_c}}(\vec{x})
    \prod_{\alpha = 1}^r \frac{\partial \psi_t^{i_\alpha}}{\partial y^{p_\alpha}}(\vec{\phi}_t(\vec{x})) \prod_{\beta \neq b,c}^s \frac{\partial \phi_t^{q_\beta}}{\partial x^{j_\beta}}(\vec{x}) \\
    &\left.\left.-2 \sum_{b=1}^r \sum_{c=1}^s \frac{\partial \xi_t^k}{\partial y^{p_b}}(\vec{\phi}_t(\vec{x})) \frac{\partial \xi_t^{q_c}}{\partial y^k}(\vec{\phi}_t(\vec{x})) \frac{\partial \psi_t^{i_b}}{\partial y^l}(\vec{\phi}_t(\vec{x})) \frac{\partial \phi_t^k}{\partial x^{j_c}}(\vec{x}) \prod_{\alpha \neq b}^r \frac{\partial \psi_t^{i_\alpha}}{\partial y^{p_\alpha}}(\vec{\phi}_t(\vec{x})) \prod_{\beta \neq c}^s \frac{\partial \phi_t^{q_\beta}}{\partial x^{j_\beta}}(\vec{x})\right)\right]
\end{align*}
for $x \in U$, $t \in [0,\tau_U(x))$. Comparing the above expressions, we can check that
\begin{align*}
&\int^t_0 \widehat{G}^{1}_s(\vec{x}) \, \diff s  =  \int^t_0 \left.\langle \phi_s^* \mathcal L_b K(s,x), S(x)\rangle \right|_U\, \diff s + \frac12 \int^t_0 \left.\langle \phi_s^* \mathcal L_\xi \mathcal L_\xi K(s,x), S(x)\rangle \right|_U \, \diff s, \\
&\int^t_0 \widehat{G}^{2}_s(\vec{x}) \, \diff [W_{\cdot},B_{\cdot}]_s = \int^t_0 \left.\langle \phi^*_s \mathcal L_\xi G(s,x), S(x)\rangle \right|_U \, \diff [W_{\cdot},B_{\cdot}]_s,\\
&\int^t_0 \widehat{G}^{3}_s(\vec{x}) \, \diff A_s = \int^t_0 \left.\langle \phi_s^* G(s,x), S(x)\rangle \right|_U \, \diff A_s \\
&\int^t_0 \widehat{H}^{1}_s(\vec{x}) \,\diff W_s = \int^t_0 \left.\langle \phi^*_s G(s,x), S(x) \rangle \right|_U\, \diff W_s, \\
&\int^t_0 \widehat{H}^{2}_s(\vec{x}) \, \diff B_s = \int^t_0 \left.\langle \phi_s^* \mathcal L_\xi K(s,x), S(x) \rangle \right|_U \, \diff B_s,
\end{align*}
for all $x \in U$ and $t \in [0,\tau_U(x)).$ Since $x\in U$ was chosen arbitrarily, the above formulas are valid for any $x \in U.$ We also notice that the resulting expressions are independent of the choice of coordinates (i.e., they can be expressed in global geometric form). Hence, given two charts $U, V \subset Q$ such that $U \cap V \neq \emptyset$, we conclude that for $x \in U \cap V,$ \eqref{Ito-Wentzell-tensor-Ito-ver} holds for all $t \in [0, \tau_U(x) \wedge \tau_V(x))$. To extend our result to the maximal time interval $[0, \tau(x))$, it suffices to apply a similar argument to that in the proof of Lemma 4.8.2 in \cite{kunita1997stochastic}, which we explain in detail in Appendix \ref{appendix:time-extension}.

Finally, the proof of \eqref{Ito-Wentzell-tensor-Ito-ver-push} follows almost identical steps by noting that
\begin{enumerate}
    \item By definition, we have $(\phi_t)_* = \psi_t^*,$ where $\psi_t := \phi_t^{-1}$ is the inverse flow.
    \item By taking into account that $\psi_t(\phi_t(y)) = y,$ for all $y \in Q$ and applying the Stratonovich chain rule, we have
    \begin{align} \label{flowinv}
    \begin{split}
        0 & = \diff [\vec{\psi}_t(\vec{\phi}_t(\vec{y}))] = \diff \vec{\psi}_t (\vec{\phi}_t(\vec{y})) + \frac{\partial \vec{\psi}_t}{\partial x^i}(\vec{\phi}_t(\vec{y})) \diff \phi_t^i(\vec{y}) \\
        &= \diff \vec{\psi}_t (\vec{\phi}_t(\vec{y})) + \frac{\partial \vec{\psi}_t}{\partial x^i}(\vec{\phi}_t(\vec{y})) b^i(\vec{\phi}_t(\vec{y})) \diff t + \frac{\partial \vec{\psi}_t}{\partial x^i}(\vec{\phi}_t(\vec{y}))  \xi^i(t, \vec{\phi}_t(\vec{y}))  \circ \diff B_t,
    \end{split} 
    \end{align}
    in local coordinates.
    Therefore setting $\vec{x} := \vec{\phi}_t(\vec{y}),$ we obtain the equation satisfied by the inverse flow in local coordinates:
    \begin{align} \label{inversaflow}
        \diff \vec{\psi}_t(\vec{x}) = -b^i(t,\vec{x}) \frac{\partial \vec{\psi}_t}{\partial x_i}(\vec{x}) \diff t - \xi^i(t,\vec{x}) \frac{\partial \psi_t}{\partial x_i}(\vec{x}) \circ \diff B_t, \quad  t \in [0,T], \quad \vec{x} \in \R^n.
    \end{align}
    \item Apply the classic It\^o-Wentzell formula (Theorem \ref{itoflows}) to evaluate $K_{i_1,\ldots,i_k}(t,\vec{\psi}_t(\vec{x}))$ in \eqref{eq:F}, using equation \eqref{inversaflow} instead of \eqref{flow-map}.
    \item Follow the remaining steps in the proof.
\end{enumerate}
\end{proof}

When enough regularity is available, a version of Theorem \ref{thm:IW-tensor-Ito-ver} in Stratonovich form can be established, which we state in the following result.

\begin{theorem}[KIW formula for tensor-valued processes: Stratonovich version] \label{thm:IW-tensor-Strat-ver}
Let $K: \Omega \times [0,T] \rightarrow  \Gamma(T^{(r,s)}Q)$ be a tensor-valued semimartingale satisfying 
\begin{enumerate}[(a)] 
\item $\mathbb{P}$-a.s. $K$ is continuous in $(t,x),$ \label{ONE1}
\item $K(t,\cdot)$ is of class $C^3$ a.s. for every $t \in [0,T].$ \label{ONE2}
\end{enumerate}
Moreover, let $S^i_t = A^i_t + M^i_t$ be continuous semimartingales expressed uniquely as the sum of a continuous process of bounded variation and a continuous local martingale, $t \in [0,T],$ $i=1,\ldots,M.$ Let $K$ have the explicit form
\begin{align} \label{K-eq-strat}
    K(t,x) = K(0,x) + \sum_{i=1}^M \int^t_0 G_i(s,x) \, \diff A_s^i + \sum_{i=1}^M \int^t_0 G_i(s,x) \circ \diff M_s^i, \quad t \in [0,T],\quad x \in Q,
\end{align}
where $G_i$ are tensor-valued adapted processes satisfying
\begin{enumerate}[(1)]
    \item $\mathbb{P}$-a.s. $G_i$ are continuous in $(t,x),$ \label{TWO1}
    \item $G_i(t,\cdot),$ are of class $C^2$ a.s. for every $t \in [0,T],$ \label{TWO2}
\end{enumerate}
$i=1,\ldots,M$. Finally, assume the representation
\begin{align} \label{ultimaassumption}
& G_i(t,x) = G_i(0,x) + \sum_{k=1}^L \int^t_0 g_{ik}(s,x) \, \diff N^{ik}_s, \quad t \in [0,T],\quad x \in Q,
\end{align}
where $h_{ik}$ are tensor-valued adapted processes such that
\begin{enumerate}[(i)] 
    \item $\mathbb{P}$-a.s. $g_{ik}$ are continuous in $(t,x),$ \label{THREE1}
    \item $g_{ik}(t,\cdot)$ are of class $C^1$ a.s. for all $t \in [0,T],$ \label{THREE2}
\end{enumerate}
and $N_t^{ik}$ are continuous semimartingales, $i=1,\ldots,M,$ $k=1,\ldots,L$.
Let $\{\phi_t(x)\}_{t \in [0,\tau(x))}$ for some stopping time $0 < \tau(x) \leq T$ be the flow of \eqref{flow-map} with coefficients satisfying Assumption \ref{assumpstrong} with $k=4.$ Then $\mathbb{P}$-a.s., the following holds
\begin{align}  \label{Ito-Wentzell-one-form-Strat-ver}
    \phi_t^* K(t,x) &=  K(0,x) + \sum_{i=1}^M \int^t_0 \phi_s^* G_i(s,x) \, \diff A^i_s + \sum_{i=1}^M \int^t_0 \phi_s^* G_i(s,x) \circ \diff M_s^i \nonumber \\
    &+ \int^t_0 \phi_s^* \mathcal L_b K (s,x) \, \diff s 
    + \sum_{j=1}^N \int^t_0 \phi_s^* \mathcal L_{\xi_j} K(s,x) \circ \diff B_s^j,\quad 
\end{align}
for all $t \in [0,\tau(x))$ and $x \in Q.$ We also have the following formula for the push-forward of $K$ by the flow
\begin{align} 
    (\phi_t)_* K(t,x) &=  K(0,x) + \sum_{i=1}^M \int^t_0 (\phi_s)_* G_i(s,x) \, \diff A_s^i + \sum_{i=1}^M \int^t_0 (\phi_s)_* G_i(s,x) \circ \diff M_s^i \nonumber \\
    & - \int^t_0 \mathcal L_b [(\phi_s)_* K] (s,x) \, \diff s
    - \sum_{j=1}^N \int^t_0 \mathcal L_{\xi_j} [(\phi_s)_* K](s,x) \circ \diff B_s^j.
\end{align}
\end{theorem}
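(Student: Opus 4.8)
The plan is to deduce the Stratonovich formulas from the It\^o version already proved in Theorem \ref{thm:IW-tensor-Ito-ver} by passing systematically between It\^o and Stratonovich integrals; the extra hypothesis \eqref{ultimaassumption} and the stronger flow regularity (Assumption \ref{assumpstrong} with $k=4$) are precisely what make each such passage legitimate. First I would rewrite the data in It\^o form: using \eqref{ultimaassumption} to identify the correction $\diff[\,G_i(\cdot,x),M^i\,]_s=\sum_k g_{ik}(s,x)\,\diff[N^{ik},M^i]_s$, the Stratonovich equation \eqref{K-eq-strat} becomes an It\^o equation of the form $K(t,x)=K(0,x)+\sum_i\int_0^t G_i\,\diff A^i_s+\tfrac12\sum_{i,k}\int_0^t g_{ik}\,\diff[N^{ik},M^i]_s+\sum_i\int_0^t G_i\,\diff M^i_s$, whose bounded-variation drivers are the $A^i$ together with the $[N^{ik},M^i]$. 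Since $K\in C^3\subset C^2$ and all coefficients are of class $C^1$ (here $G_i\in C^2$, $g_{ik}\in C^1$), and the flow satisfies Assumption \ref{assumpstrong} with $k=4\ge 1$, Theorem \ref{thm:IW-tensor-Ito-ver} applies and produces \eqref{Ito-Wentzell-tensor-Ito-ver} with one extra bounded-variation term $\tfrac12\sum_{i,k}\int_0^t\phi_s^*g_{ik}\,\diff[N^{ik},M^i]_s$.

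Next I would convert this output back to Stratonovich form. Re-expressing the two martingale integrals $\sum_i\int\phi_s^*G_i\,\diff M^i_s$ and $\sum_j\int\phi_s^*\mathcal L_{\xi_j}K\,\diff B^j_s$ in Stratonovich form introduces the corrections $-\tfrac12\sum_i\int\diff[\,\phi^*_\cdot G_i(\cdot,x),M^i\,]_s$ and $-\tfrac12\sum_j\int\diff[\,\phi^*_\cdot\mathcal L_{\xi_j}K(\cdot,x),B^j\,]_s$. To evaluate these covariations I would read off the martingale parts of $s\mapsto\phi_s^*G_i(s,x)$ and of $s\mapsto\phi_s^*\mathcal L_{\xi_j}K(s,x)$ by a further application of the It\^o KIW formula, now to the tensor semimartingales $G_i$ and $\mathcal L_{\xi_j}K$ (the latter is a $C^2$-tensor semimartingale because $K\in C^3$ and $\xi_j$ is smooth, with coefficient $\mathcal L_{\xi_j}G_i\in C^1$). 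This gives $\diff[\,\phi^*_\cdot G_i,M^i\,]_s=\sum_k\phi_s^*g_{ik}\,\diff[N^{ik},M^i]_s+\sum_j\phi_s^*\mathcal L_{\xi_j}G_i\,\diff[B^j,M^i]_s$ and, using independence of the $B^j$ (so that $\diff[B^k,B^j]_s=\delta_{jk}\,\diff s$) and that the dynamics of $K$ contains no $\diff B^j$ term, $\diff[\,\phi^*_\cdot\mathcal L_{\xi_j}K,B^j\,]_s=\sum_i\phi_s^*\mathcal L_{\xi_j}G_i\,\diff[M^i,B^j]_s+\phi_s^*\mathcal L_{\xi_j}\mathcal L_{\xi_j}K\,\diff s$. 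Substituting, the $[N^{ik},M^i]$-corrections cancel the extra bounded-variation term introduced in the first step; the two halves of the $[M^i,B^j]$-corrections cancel the mixed-covariation term of \eqref{Ito-Wentzell-tensor-Ito-ver}; and the leftover $-\tfrac12\sum_j\int\phi_s^*\mathcal L_{\xi_j}\mathcal L_{\xi_j}K\,\diff s$ cancels the second-order Lie term of \eqref{Ito-Wentzell-tensor-Ito-ver}. What survives is exactly \eqref{Ito-Wentzell-one-form-Strat-ver}. The push-forward identity is obtained the same way, starting from \eqref{Ito-Wentzell-tensor-Ito-ver-push} (whose $k=3$ requirement, together with the extra derivative lost in the conversions, is what forces $k=4$) and using $(\phi_t)_*=\psi_t^*$ with the inverse-flow equation \eqref{inversaflow}; validity on the maximal interval $[0,\tau(x))$ is inherited from Theorem \ref{thm:IW-tensor-Ito-ver}.

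I expect the main obstacle to be the regularity audit rather than the algebra. Each conversion in the second step applies the It\^o KIW formula to a Lie-derivative-transformed object, which costs one degree of smoothness of $K$ and of the $G_i$ — this is exactly why $K\in C^3$, $G_i\in C^2$, and the nested representation \eqref{ultimaassumption} of the coefficient processes are imposed — and composing a Lie derivative with a pull-back by the flow consumes further derivatives of $b$ and $\xi_j$, which is what upgrades the flow hypothesis from $k=1$ in the It\^o pull-back to $k=4$ here; keeping these accounts balanced (in particular, ensuring the intermediate semimartingales still satisfy the hypotheses of Theorem \ref{thm:IW-tensor-Ito-ver}) and checking that the intermediate local expressions again patch across charts into the global objects in \eqref{Ito-Wentzell-one-form-Strat-ver} is the delicate part. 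A leaner, self-contained alternative, which I would also consider, is to repeat the local-coordinate computation in the proof of Theorem \ref{thm:IW-tensor-Ito-ver} verbatim but driven by the \emph{Stratonovich} It\^o-Wentzell formula (Theorem \ref{straflows}) and the correction-free Stratonovich equations for $D\phi_t$ and $D\psi_t=(D\phi_t)^{-1}$: then no covariation or second-order Lie terms ever appear, and the $\diff s$ and $\circ\diff B^j$ coefficients collapse directly onto $\phi_s^*\mathcal L_bK$ and $\phi_s^*\mathcal L_{\xi_j}K$ by the same algebraic identity used there.
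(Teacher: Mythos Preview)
Your conversion-based strategy is sound, but the paper takes a genuinely different and more geometric route. Rather than deriving the Stratonovich formula from Theorem \ref{thm:IW-tensor-Ito-ver} by converting back and forth, the paper pairs $K$ against a \emph{deterministic} smooth test tensor pushed forward by the flow: it sets $F_t(x) := \langle K(t,x), (\phi_t)_* S(x)\rangle$ for arbitrary $S \in C^\infty(T^{(s,r)}Q)$, computes $\diff((\phi_t)_* S)$ via Kunita's formula (Theorem \ref{ito-eq}, which is precisely where the $k=4$ flow hypothesis enters), and then applies the \emph{scalar} Stratonovich It\^o--Wentzell formula (Theorem \ref{straflows}) to $\phi_t^* F_t = \langle \phi_t^* K, S\rangle$. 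The algebraic identities $\phi_t^* \mathcal L_X = \mathcal L_{\phi_t^* X}\phi_t^*$ and $\mathcal L_b\langle K,S\rangle = \langle \mathcal L_b K,S\rangle + \langle K,\mathcal L_b S\rangle$ then collapse the result directly into the pairings $\langle \phi_s^*\mathcal L_b K, S\rangle$ and $\langle \phi_s^*\mathcal L_{\xi_j} K, S\rangle$, and arbitrariness of $S$ finishes. This duality trick offloads all the flow-differentiation onto the deterministic tensor $S$, sidestepping both the iterated applications of Theorem \ref{thm:IW-tensor-Ito-ver} and the regularity bookkeeping you correctly flag as delicate --- in particular the step where you apply Theorem \ref{thm:IW-tensor-Ito-ver} to $\mathcal L_{\xi_j} K$, whose It\^o-form bounded-variation coefficient $\mathcal L_{\xi_j} g_{ik}$ is only $C^0$ (harmless for the covariation you need, but not literally covered by the theorem's hypotheses). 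Your approach has the virtue of making transparent exactly which hypothesis is consumed at which conversion; the paper's is shorter and coordinate-free at the top level, but the role of \eqref{ultimaassumption} and the $C^3$/$C^2$ ladder is less visible. Your alternative (redo the coordinate proof in Stratonovich form) is closer in spirit but still not what the paper does: the paper avoids repeating the heavy coordinate computation altogether by shifting it onto Kunita's formula for $S$.
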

\begin{proof}[Proof of Theorem \ref{thm:IW-tensor-Strat-ver}]
For simplicity and without loss of generality, we treat the case $M=N=1.$ We first prove \eqref{Ito-Wentzell-one-form-Strat-ver}. Consider the scalar function-valued stochastic field
\begin{align} \label{truco}
F_t(x) :=  \langle K(t,x), (\phi_t)_* S(x) \rangle,\quad t \in [0,T], \quad x \in Q,
\end{align}
where $\langle \cdot, \cdot \rangle$ represents dual pairing and $S \in C^\infty (T^{(s,r)}Q)$ is an arbitrary smooth tensor. By the Stratonovich product rule, we have
\begin{align} \label{product}
\diff F_t(x) =  \langle \circ \diff K(t,x), (\phi_t)_* S(x) \rangle + \langle K(t,x), \circ \diff ((\phi_t)_* S(x)) \rangle.
\end{align}
We apply Kunita's push-forward formula to the tensor $S$ (i.e., use \eqref{ito-second-1} with $s=0$ and observe that $(\phi^{-1}_t)^*=(\phi_t)_{*}$), obtaining the following expression for the term $\diff ((\phi_t)_* S)$
\begin{align*}
&(\phi_{t})_* S-S = -\int_0^t \mathcal{L}_{b} [(\phi_{s})_* S] \, \diff s
-  \int_0^t \mathcal{L}_{\xi} [(\phi_{s})_* S] \circ \diff B_s.
\end{align*}
By substituting this formula into \eqref{product} and taking into account the equation for $K$, we obtain
\begin{align*} 
& \diff F_t(x) =  \langle G(t,x), (\phi_t)_*  S(x) \rangle \, \diff A_t +  \langle G(t,x), (\phi_t)_*  S(x) \rangle \circ \diff M_t \\
& \hspace{90pt} -  \langle K(t,x), \mathcal{L}_{b} [(\phi_{t})_* S](x) \rangle \, \diff t - \langle K(t,x), \mathcal{L}_{\xi} [(\phi_{t})_* S](x) \rangle \circ \diff B_t.
\end{align*}
We note that the above equation satisfies the hypotheses of the It\^o-Wentzell formula for flows in Stratonovich form (Theorem \ref{straflows}). \footnote{We note that Theorem \ref{straflows} is stated on the Euclidean space, so in order to apply this, we work on local charts until some stopping time and extend to the maximal stopping time by patching together the local results. This is exactly the same procedure we followed in the proof of Theorem \ref{thm:IW-tensor-Ito-ver}.}
Indeed, \ref{one1} and \ref{one2} in Theorem \ref{straflows} are met due to hypotheses \ref{ONE1} and \ref{ONE2} above, the fact that $\phi_t$ is a $C^4$-flow of diffeomorphisms due Assumption \ref{assumpstrong} with $k=4,$ and the fact that $S$ is smooth, together with \eqref{product}. Moreover, \ref{two1} and \ref{two2} in Theorem \ref{straflows} are verified by taking into account the previous remarks together with hypotheses \ref{TWO1} and \ref{TWO2} above. Note that $\mathcal{L}_{b} [(\phi_{t})_* S]$ and $\mathcal{L}_{\xi} [(\phi_{t})_* S]$ are $C^2$-regular in space since $\phi_t$ is a $C^4$-flow of diffomorphisms and the Lie derivative is a first order differential operator. Finally, \ref{three1} and \ref{three2} in Theorem \ref{straflows} are met due to hypotheses \ref{THREE1} and \ref{THREE2} and previous remarks.

Hence, applying Theorem \ref{straflows} to the (real-valued) stochastic process $F_t (\phi_t(x))$, and noticing that the Lie derivative in the scalar case takes the form $\mathcal{L}_b f = b \cdot D f$, we obtain
\begin{align*} 
F_t(\phi_t(x)) = \phi_t^* F_t(x)  &= F_0(x) + \int_0^t \phi_s^* \langle G(s,x), (\phi_s)_*  S(x) \rangle \, \diff A_s  + \int_0^t \phi_s^* \langle G(s,x), (\phi_s)_*  S(x) \rangle \circ \diff M_s  \\
&\quad - \int_0^t \phi_s^* \langle K(s,x), \mathcal{L}_{b} [(\phi_{s})_*S](x) \rangle \, \diff s  - \int_0^t \phi_s^* \langle K(s,x), \mathcal{L}_{\xi}[(\phi_{s})_*S](x) \rangle \circ \diff B_s \\
&\quad + \int_0^t \phi_s^* \mathcal{L}_{b} \langle K(s,x), (\phi_{s})_*S(x) \rangle \, \diff s + \int_0^t \phi_s^* \mathcal{L}_{\xi} \langle K(s,x), (\phi_{s})_* S(x) \rangle \circ \diff B_s. \\
& = F_0(x) + \int_0^t \langle \phi_s^* G(s,x),  S(x) \rangle \, \diff A_s  + \int_0^t  \langle \phi_s^*  G(s,x),  S(x) \rangle \circ \diff M_s  \\
&\quad + \int_0^t  \mathcal{L}_{\phi_s^* b} \langle \phi_s^* K(s,x), S(x) \rangle \, \diff s - \int_0^t  \langle \phi_s^* K(s,x), \mathcal{L}_{\phi_s^*  b} S(x) \rangle \, \diff s  \\
&\quad + \int_0^t  \mathcal{L}_{\phi_s^* \xi} \langle \phi_s^* K(s,x), S(x) \rangle \circ \diff B_s  - \int_0^t  \langle \phi_s^* K(s,x), \mathcal{L}_{\phi_s^* \xi} S(x) \rangle \circ \diff B_s,
\end{align*}
where we have taken into account the fact that $\phi_t^* \mathcal{L}_X K = \mathcal{L}_{\phi_t^*X} \phi_t^* K,$ for any smooth vector field $X$ and tensor field $K$, together with the identity $\phi_t^* \circ (\phi_t)_* = id$. Also, we employ the property $\mathcal{L}_b \langle K,S \rangle =  \langle \mathcal{L}_b K,S \rangle + \langle  K, \mathcal{L}_b  S \rangle$ to conclude
\begin{align} \label{casiprueba} 
\phi_t^* F_t(x) &= F_0(x) + \int_0^t \langle \phi_s^* G(s,x),  S(x) \rangle \, \diff A_s  + \int_0^t  \langle \phi_s^*  G(s,x),  S(x) \rangle \circ \diff M_s \nonumber \\
& \quad + \int_0^t   \langle \mathcal{L}_{\phi_s^*  b} [\phi_s^* K](s,x), S(x) \rangle \, \diff s + \int_0^t  \langle \mathcal{L}_{\phi_s^* \xi} [\phi_s^* K](s,x), S(x) \rangle \circ \diff B_s \nonumber \\
&= F_0(x) + \int_0^t \langle \phi_s^* G(s,x),  S(x) \rangle \, \diff A_s  + \int_0^t  \langle \phi_s^*  G(s,x),  S(x) \rangle \circ \diff M_s \nonumber \\
& \quad + \int_0^t   \langle \phi_s^* [\mathcal{L}_{b} K](s,x), S(x) \rangle \, \diff s + \int_0^t  \langle \phi_s^* [\mathcal{L}_{\xi} K](s,x), S(x) \rangle \circ \diff B_s.
\end{align}
By applying the pull-back to \eqref{truco}, we get 
\begin{align*} 
\phi_t^* F_t(x) =  \langle \phi_t^* K(t,x), S(x) \rangle.
\end{align*}
Finally, by observing that formula \eqref{casiprueba} is valid for any smooth tensor  
$S \in C^\infty (T^{(s,r)}(Q))$, we conclude $\mathbb{P}$-a.s.,
\begin{align*} 
    & \phi_t^* K(t,x) =  K(0,x) + \int^t_0 \phi_s^* G(s,x) \, \diff A_s + \int^t_0 \phi_s^* G(s,x) \circ \diff M_s \\
    &\hspace{50pt} + \int^t_0 \phi_s^* [\mathcal L_b K] (s,x) \, \diff s
    + \int^t_0 \phi_s^* [\mathcal L_{\xi} K](s,x) \circ \diff B_s, \quad t \in [0,\tau), \quad x \in Q.
\end{align*}
\end{proof}

\section{Summary and discussion}\label{sec:discussion}

In this work, we established a geometric generalisation of the It\^o-Wentzell formula and Kunita's formulas \eqref{ito-second-1}--\eqref{ito-first-2} that shows how tensor-valued semimartingales over manifolds transform under the pullback/pushforward with respect to stochastic flows of SDEs. In doing so, we observe the following subtleties that are noteworthy.

\begin{itemize}
    \item The Stratonovich version of our generalised KIW formula (Theorem \ref{thm:IW-tensor-Strat-ver}) requires stronger regularity assumptions than the It\^o version (Theorem \ref{thm:IW-tensor-Ito-ver}). The stronger assumptions imposed in the former allow for a more straightforward proof.
    \item Interestingly, we note that the KIW formula \eqref{Ito-Wentzell-tensor-Ito-ver} for the pull-back of a tensor-valued process by a diffeomorphism requires less regularity assumptions on the drift and diffusion coefficients of \eqref{flow-map} than the formula for the push-forward \eqref{Ito-Wentzell-tensor-Ito-ver-push}. This is natural since in the latter formula, the Lie derivatives appear outside the push-forward, so at least three spatial derivatives of the flow are required to make sense of equation \eqref{Ito-Wentzell-tensor-Ito-ver-push}. In the former however, only one derivative is necessary as the Lie derivatives appear inside the pull-back.
    Hence, equations \eqref{Ito-Wentzell-tensor-Ito-ver} and \eqref{Ito-Wentzell-tensor-Ito-ver-push} are not strictly counterparts of one another, as one might expect. That is, they cannot be viewed as equivalent statements, where one is merely derived from the other by replacing $\phi_t$ with its inverse.
    \item Our main results (Theorems \ref{thm:IW-tensor-Ito-ver} and \ref{thm:IW-tensor-Strat-ver}) are stated up to a stopping time since on a general manifold, the flow of \eqref{flownotation} under Assumption \ref{assumpstrong} does not necessarily admit a global solution. We can however get global-in-time statements under stronger conditions. For example, when the manifold $Q$ is compact, then \eqref{flownotation} admits a global flow if the drift and diffusion vector fields satisfy Assumption \ref{assumpstrong} (see Proposition \ref{eq:to-blow-or-no-blow}). Alternatively, when $Q$ admits a Riemannian structure, then we have a well-defined notion of ``global H\"older spaces" $C_b^\alpha(T^{(r,s)} Q)$ on tensor fields over $Q$, which is a Banach space of tensor fields complete under the norm
    \begin{align}
        \|K\|_\alpha := \sup_{x, y \in Q} \frac{|K(x)-\tau_{y \rightarrow x}K(y)|}{d(x, y)^{\alpha }},
    \end{align}
    where $d(\cdot, \cdot)$ denotes the geodesic distance and $\tau_{y \rightarrow x}$ denotes parallel transport of tensors from point $y$ to $x$ along the geodesic.
    Assuming that the vector fields are globally H\"older, we can also guarantee the existence of a global-in-time solution almost surely.
\end{itemize}

We conclude by noting that the results we established in this paper could be potentially useful in the following applications, which we plan to explore in future works:
\begin{itemize}
    \item Theorems \ref{thm:IW-tensor-Ito-ver} and \ref{thm:IW-tensor-Strat-ver} can be applied in stochastic fluid dynamics to derive stochastic equations of motion on manifolds (e.g. the sphere) from first principles using conservation laws. Indeed, we can generalise the applications established in Section 4 of \cite{takao2019} on the Euclidean space to statements on general manifolds. Furthermore, conservation laws for new types of objects appearing naturally in physics (such as vector fields or one-form densities) can be included.
    \item Our results may be useful in the field of well-posedness/regularisation by noise, where the classic It\^o-Wentzell formula plays a prominent role. For example, in the work \cite{flandoli2010well}, the It\^o-Wentzell formula is used as a key tool to establish uniqueness of solutions to the transport equation on $\mathbb{R}^n$ perturbed by transport noise. Indeed, the generalised KIW formulas we derive here could be useful for proving similar statements in more general settings, such as when the transport equation is defined over manifolds, or when transport of general tensors (instead of scalars, or a tuple of scalars as considered in \cite{flandoli2010well}) are considered. For more concrete details, we refer the readers to Section 5 of \cite{flandoli2010well}.
\end{itemize}

\appendix
\section{Time extension}\label{appendix:time-extension}
In this appendix, we discuss how to extend the time window in the proof of Theorem \ref{thm:IW-tensor-Ito-ver}, so that the main result holds for all $t \in [0, \tau(x))$, where $\tau(x)$ is the maximal existence stopping time of the flow $\phi_t(x)$. To this end,  
we follow the argument in \cite{kunita1997stochastic}, Lemma 4.8.2, which proves the existence of a unique maximal solution of SDEs defined on manifolds under Assumption \ref{assumpstrong}. The argument can be used to extend the time window of any result involving the flow $\phi_t$, up to its maximal time of existence.

The idea of the argument is as follows. First, note that under Assumption \ref{assump:manifold}, $Q$ is metrizable by the Urysohn metrization theorem \cite{bredon2013topology}. Hence, we can define a distance function $d : Q \times Q \rightarrow \mathbb{R}^+$ and treat $Q$ as a metric space, whose metric topology coincides with the manifold topology.
Now consider three countable families $(U_l)_{l=1}^\infty, (V_l)_{l=1}^\infty, (W_l)_{l=1}^\infty$ of open neighbourhoods of $Q$, with the following properties:
\begin{enumerate}
    \item For all $l \in \mathbb{N}$, $U_l, V_l, W_l$ are open balls with a common centre and radii of $3r, 2r, r$ respectively, for arbitrary $r > 0$.
    \item $(W_l)_{l=1}^\infty$ is an open cover for $Q$. That is, for any $x \in Q,$ there exists $l \in \mathbb{N}$ such that $x \in W_l$.
\end{enumerate}
Since $(W_l)_{l=1}^\infty$ is an open cover, we set $x \in W_1$ without loss of generality. Next, we extract a sub-family of open neighbourhoods $(U_{l_m})_{m=1}^\infty, (V_{l_m})_{m=1}^\infty, (W_{l_m})_{m=1}^\infty$ and a sequence of stopping times $(\tau_m)_{m=1}^\infty$ by applying the argument presented in Algorithm \ref{alg:time-extension}, which we expressed in pseudocode format for ease of presentation.
\begin{algorithm}[h]
\begin{algorithmic}
\State Initialisation: $l_1 = 1$, $x_1 = x$, $\tau_1 = 0$.
\For{$m = 1, 2, 3, \ldots$}
    \begin{enumerate}
    \item Restrict to the local chart $U_{l_m}$ and prove ``result'' as a statement on $\mathbb{R}^n$ until the stopping time $\tau_{m+1}(x_m) := \inf \{t > 0: \phi_{\tau_m, t}(x_{m}) \notin V_{l_m}\},$ which we denote by $\tau_{m+1}$ for simplicity.
    \item Verify that ``result'' is independent of this choice of chart.
    \item Choose $m' \in \mathbb{N}$ such that $\phi_{\tau_m, \tau_{m+1}}(x_m) \in W_{m'}\backslash\bigcup_{\alpha < m'} W_\alpha$, which is always possible since $(W_l)_{l=1}^\infty$ is an open cover of $Q$.
    \item Set $l_{m+1} = m'$ and $x_{m+1} = \phi_{\tau_{m}, \tau_{m+1}}(x_m)$. Note that since $x_{m+1} \in U_{l_m} \cap U_{l_{m+1}}$, we can return to step 1.
    \end{enumerate}
\EndFor
\end{algorithmic}
\caption{Argument to extend the time window of a ``result'' involving the flow $\phi_t$.}
\label{alg:time-extension}
\end{algorithm}

In plain language, this algorithm allows us to construct a path
$U_{l_1} \cup U_{l_2} \cup \cdots$ for $\phi_t(x)$ to traverse (i.e., for each $t$ there exists $m$ such that $\phi_t(x) \in U_{l_m}$), such that on each $U_{l_m}$, the ``result'' reduces to a statement in the Euclidean space. Moreover, in \cite[Lemma 4.8.2]{kunita1997stochastic}, Kunita shows that under such construction, the maximal time of existence $\tau(x)$ for $\phi_t(x)$ can be computed as
\begin{align}
    \tau(x) = \lim_{M \rightarrow \infty}\sum_{m=0}^M \tau_{m+1}(x_{m}).
\end{align}
Hence, if we can iterate the algorithm indefinitely, then the result holds for all $t \in [0, \tau(x)).$

Coming back to our proof of Theorem \ref{thm:IW-tensor-Ito-ver}, we have already shown that in the first iteration ($m=1$), steps 1 and 2 in Algorithm \ref{alg:time-extension} hold for any $x_1 = x \in Q$, where we replace ``result'' by the statement that the KIW formula for tensor-valued processes \eqref{Ito-Wentzell-tensor-Ito-ver} holds (recall that in our proof, the chart was chosen arbitrarily, hence it also holds for the specific chart $U_{1}$ given in the algorithm). Clearly, steps 3 and 4 hold automatically, which takes us to the next iteration $m = 2$. Note that our proof of Theorem \ref{thm:IW-tensor-Ito-ver} still holds by replacing $\phi_t$ with $\phi_{s,t}$ for any $0 < s \leq t$ (i.e., the argument does not depend on the initial time of the flow) and due to the arbitrariness of the spatial variable $x_m \in Q$, we can verify steps 1 and 2 again for the updated $x_m, \tau_m$. By induction, the loop continues indefinitely. This concludes our proof that \eqref{Ito-Wentzell-tensor-Ito-ver} holds for all $t \in [0, \tau(x))$.

\section*{Acknowledgements}
{\small
ABdL has been funded by the Margarita Salas grant awarded by the Ministry of Universities granted by Order UNI/551/2021 of May 26, as well as the European Union Next Generation EU Funds. ST acknowledges the Schr\"odinger scholarship scheme for much of the funding during this work.}

\section*{Declarations}
Declaration of interests: none.

\bibliography{biblio}
\bibliographystyle{plain}

\end{document}